\renewcommand{\epsilon}{\varepsilon}
\newtheorem{theorem}{{\bf Theorem}}[section]
\theoremstyle{plain}
\newtheorem{lemma}{Lemma}[section]
\newtheorem{proposition}{Proposition}[section]
\newtheorem{remark}{Remark}[section]
\title{\Large \bf  Sharp estimates for Lyapunov exponents of Milstein approximation of stochastic differential systems}
\author{Vu Thi Hue\footnote{Email:  hue.vuthi@hust.edu.vn}}
\date{}
\begin{document}
\maketitle

\vspace*{-1cm}

\begin{center}
{\small
Faculty of Mathematics and Informatics, Hanoi University of Science and Technology, \\ 
No.1 Dai Co Viet Road, Hanoi, Vietnam  
}
\end{center}

\begin{abstract} The Milstein approximation with step size $\Delta t>0$ of the solution $(X, Y)$ to a two-by-two system of linear stochastic differential equations is considered. It is proved that when the solution of the underlying model is exponentially stable or exponentially blowing up at infinite time, these behaviours are preserved at the level of the Milstein approximate solution $\{(X_n, Y_n)\}$ in both the mean-square and almost-sure senses, provided sufficiently small step size $\Delta t$. This result is based on sharp estimates, from both above and below, of the discrete Lyapunov exponent. This type of sharp estimate for approximate solutions to stochastic differential equations seems to have been first studied in this work. 
In particular, the proposed method covers the setting for linear stochastic differential equations as well as the $\theta$-Milstein scheme's setting.

\medskip

\noindent \textbf{Keywords}: Milstein approximation, Sharp Lyapunov exponent, Exponential stability, Exponential blow-up.  
\end{abstract}

\section{Introduction}
Numerical approximations for stochastic differential equations are crucial and have been studied for many years. Among many constructed schemes, the Euler-Maruyama and Milstein schemes are among the most well-known, see e.g. \cite{Kloeden1992numerical,milstein2004stochastic}. In practice, it is frequently desirable that essential properties of the continuum solutions are also preserved at the discretised level. In the last two decades, the topic of this preservation has attracted a lot of attention, resulting in an extensive list of work including
\cite{appleby2008stabilization,bellman1985stability,braverman2020global,caraballo2003stochastic,hu2004discrete,kushner1968stability,mao1994stochastic,meerkov1982condition,saito1996stability, wu2010almost} and many references therein.

%\todo{\tiny To sort this list in the order from small number to big index?} 

\medskip

This paper is devoted to studying the preservation of the exponential stability and exponential blow-up at infinite time
for the Milstein scheme of the following system of linear stochastic differential equations (SDEs), proposed in \cite{buckwar2010towards},  
\begin{equation}
    d \begin{pmatrix}
    X \\
    Y 
    \end{pmatrix} \!=\! 
    \begin{pmatrix}
    \lambda & 0 \\
    0 & \lambda
    \end{pmatrix} 
    \begin{pmatrix}
    X \\
    Y 
    \end{pmatrix} dt 
    + 
    \begin{pmatrix}
    \sigma & 0 \\
    0 & \sigma
    \end{pmatrix}\!\! 
    \begin{pmatrix}
    X \\
    Y 
    \end{pmatrix}  dB_1(t)  + 
    \varepsilon \begin{pmatrix}
    0 & -1 \\
    1 & 0
    \end{pmatrix} \!\!
    \begin{pmatrix}
    X \\
    Y 
    \end{pmatrix}  dB_2(t), 
\label{SDE:System}
\end{equation}
subjected to the initial condition
\begin{align}
    (X(0), Y(0))=(x_{0},y_{0}) \in \mathbb{R}^2 \setminus \{(0,0)\},
    \label{InitialData}
\end{align}
where  $\lambda \in \mathbb{R}$ is the drift coefficient, $\sigma, \varepsilon \in \mathbb{R}$ are the diffusion coefficients, and $B_1,B_2$ are standard scalar Brownian motions defined on a complete probability space $(\Omega
,\mathbb{P}, \mathcal F)$. The initial datum $(x_0,y_0)$ is assumed to be different from zero, which avoids the trivial solution. Our work focuses on large-time behaviours of the solution's module, for which, direct computations using the It\^o formula shows that  
\begin{align}
    d\log|Z(t)| = \left( \lambda - \frac{\sigma^2 - \varepsilon^2}{2} \right) dt + \sigma dB_1(t) , 
    \label{For:LogXY}
\end{align}
  where $Z:= [X \; Y]^{\top}$, and $|\cdot|$ is the Euclidean norm of $\mathbb R^2$. 

\medskip

The solution $Z$ to \eqref{SDE:System} is said to be \textit{mean-square exponentially stable}, respectively  \textit{almost-surely exponentially stable} with the exponent $\alpha>0$ if we have $$[ \mathbb{E}|Z(t)|^2]^{1/2} \le Ce^{-\alpha t}, \quad \text{respectively} \quad |Z(t)| \le Ce^{-\alpha t} \quad\text{almost-surely as } t \to \infty.$$ Conversely, if the previous sign ``$\le$" is replaced by ``$\ge$" and $-\alpha$ by $\beta>0$ then the solution is said to be \textit{mean-square exponentially blowing up} (at infinite time), or respectively, \textit{almost-surely exponentially blowing up} with the exponent $\beta$. We refer the reader to work  \cite{higham2007almost} for more details. 
It is well-known that these large-time behaviours are all encapsulated in the following limits, which can be directly obtained from \eqref{For:LogXY}, 
\begin{gather*}
      \limsup_{t\to\infty} \dfrac{1}{t} \log [ \mathbb{E}|Z(t)|^2]^{1/2} = \lambda + \frac{\varepsilon^2}{2} + \frac{\sigma^2}{2} , 
\end{gather*} 
and 
\begin{gather*}
   \limsup_{t\to\infty} \dfrac{1}{t} \log |Z(t)|  \, \stackrel{\text{a.s.}} {=} \, \lambda + \frac{\varepsilon^2}{2} - \frac{\sigma^2}{2} .
\end{gather*} 
Consequently, the solution $Z$ is mean-square or almost-surely exponentially stable with the exponent $\lambda+ \varepsilon^2/2 + \sigma^2/2$ or $\lambda+ \varepsilon^2/2 - \sigma^2/2$ if 
 $$(\lambda,\varepsilon, \sigma)\in \mathbf S^{\mathsf{ms}}:= \left\{(x,y,z) \in \mathbb{R}^3 \Big|\,x + \frac{y^2}{2} + \frac{z^2}{2} <0 \right\},$$ or respectively, if $$(\lambda, \varepsilon, \sigma)\in \mathbf S^{\mathsf{as}}:=\left\{(x,y,z) \in \mathbb{R}^3 \Big|\,x + \frac{y^2}{2} - \frac{z^2}{2} <0 \right\},$$
 and it is mean-square or almost-surely exponentially blowing up with the same exponents (as above) if $(\lambda, \varepsilon, \sigma)\in \mathbf B^{\mathsf{ms}} := \mathbb{R}^3 \setminus \overline {\mathbf S^{\mathsf{ms}} } $, or respectively, if $(\lambda, \varepsilon, \sigma)\in \mathbf B^{\mathsf{as}} := \mathbb{R}^3 \setminus \overline {\mathbf S^{\mathsf{as}} } $. Here, the $\overline{A}$ means the closure of $A$ in $\mathbb R^3$. 

 \medskip

The preservation of the above behaviours is expected to be retained for the Milstein approximation of \eqref{SDE:System} with a small enough step size $\Delta t$. Thanks to the expression \eqref{For:LogXY}, solution module $|Z|$ can be approximated as
\begin{align*}
    |Z(t+\Delta t)| & = |Z(t)| \exp \left[ \int_{t}^{t+\Delta t} \left( \lambda - \frac{\sigma^2 - \varepsilon^2}{2} \right) ds + \int_t^{t+\Delta t} \sigma dB_1(s)  \right] \\
    & \approx |Z(t)| \left [ 1 + \left( \left( \lambda + \frac{\varepsilon^2}{2} \right) - \frac{\sigma^2}{2}  \right)  \Delta t  +  \sigma   \Delta B_{1} +\frac{\sigma^2}{2}  \Delta B_{1}^2 \right ]. 
\end{align*}
Let $\{t_n\} :=\{ n \Delta t \} $,  for $n\in \mathbb{N}$, $n\ge 0$, be a discrete-time sequence, and  $\Delta B_{1,n}$ be the Brownian motion  increment $B_1(t_{n+1})-B_1(t_{n})$. The Milstein approximation of $|Z(t_n)|$ is given by   
\begin{align}
    |Z_n| 
    = |Z_{n-1}| \left [ 1 + \left( \left( \lambda + \frac{\varepsilon^2}{2} \right) - \frac{\sigma^2}{2}  \right)  \Delta t  +  \sigma   \Delta B_{1,n-1} +\frac{\sigma^2}{2}  \Delta B_{1,n-1}^2 \right ],
    \label{FinalLable}
\end{align}
for all $n\in \mathbb{N}$, $n\ge 1$, where $Z_0:=(x_0,y_0)$, see e.g.   \cite{milstein2004stochastic,Kloeden1992numerical} for general introduction. 

\medskip

Large-time behaviours of mean-square/almost-surely exponential stability of the zero equilibrium of the Euler-Maruyama scheme for \eqref{SDE:System} were studied 
in \cite[Section 4]{buckwar2010towards} via a lemma based on the martingale convergence theorem in \cite{Shiryaev1996probability}, where the instability or blow-up at infinite time had not been discussed. 
%In conjunction with a discrete form of the It\^o formula, the same techniques can be found in \cite{appleby2009non}. 
Then, in Berkolaiko-Buckwar-Kelly-Rodkina \cite{berkolaiko2012almost} (and its corrigendum \cite{berkolaiko2013corrigendum}),   the almost sure stability and blow-up at finite time of the zero equilibrium of the Euler-Maruyama scheme for \eqref{SDE:System} was obtained, where they showed that  
\begin{align}
    \lim_{n\to \infty} |Z_n|=0 \quad \text{if and only if} \quad  (\lambda,\varepsilon,\sigma) \in \mathbf{S}^{\mathsf{as}},
    \label{Pic:1}
\end{align}
and 
\begin{align}
    \lim_{n\to \infty} |Z_n|=\infty \quad \text{if and only if} \quad (\lambda,\varepsilon,\sigma) \in \mathbf{B}^{\mathsf{as}},
    \label{Pic:2}
\end{align}
see \cite[Theorem 3.2]{berkolaiko2012almost}. Unfortunately, based on the discrete It\^o formula, their method did not give an estimate for the discrete Lyapunov exponent $$ \limsup_{n\to\infty} \frac{1}{t_n} \log | Z_n |.$$ We notice that when $\varepsilon=0$, System \eqref{SDE:System} is reduced to an uncoupling system of linear stochastic differential equations, for which, standard analysis can be found in 
\cite{buckwar2010towards, kelly2013almost, mao2015almost}. 

\medskip 

Regarding mathematical techniques, estimating the Lyapunov exponents of numerical schemes \textit{from} \textit{above} has been studied quite early, such as close-to-sharp estimates in \cite[Section 3]{higham2007almost} and in many more papers \cite{buckwar2010towards,mao2015almost} with stability results on Euler-Maruyama and Milstein schemes as well as their relatives, where one of the essential steps is the utilisation of the fundamental inequality 
\begin{align}
    \log(1+x)\le x - \frac{x^2}{2} + \frac{x^3}{3}, \quad \text{for } x>-1 .
    \label{LogIneq}
\end{align}
However, constructing such estimates \textit{from below}, which is sharp in the sense that 
\begin{align}
     \limsup_{n\to\infty} \frac{1}{n\Delta t} \log | Z_n | \, \stackrel{\text{a.s.}} {\ge} \, \limsup_{t\to\infty} \dfrac{1}{t} \log |Z(t)| - C(\Delta t) 
    \label{SharpSense}
\end{align}
with $\lim_{\Delta t\to 0}C(\Delta t) = 0$, seems highly challenging. Up to the best of our knowledge, constructing lower estimates for the Lyapunov exponents of numerical schemes of stochastic differential equations has not been studied yet. 

\medskip 

In this paper, in both mean-square and almost-sure senses, the discrete Lyapunov exponent of \eqref{SDE:System} (i.e., the Lyapunov exponent of \eqref{FinalLable}) will be sharply estimated from both above and below, corresponding to the exponential stability and exponential blow-up of $|Z_n|$. Our main result is given as follows, where the abbreviation a.s. stands for the almost-sure property.

\begin{theorem} 
\label{Theo:Main}
Let $ \{Z_n\} $ be the solution to the Milstein scheme \eqref{FinalLable} with a given step size $0<\Delta t<1$. Then, there exist $0<\Delta t_1<1$ and $C_1,C_2>0$ such that
\begin{gather}
    \lambda + \frac{\varepsilon^2}{2} + \dfrac{\sigma^2}{2}  - C_1\Delta t  \le \limsup_{n\to\infty} \dfrac{1}{t_n} \log \big[ \mathbb{E}|Z_n|^2\big]^{1/2}  \le  \lambda + \frac{\varepsilon^2}{2} + \dfrac{\sigma^2}{2} + C_1\Delta t,  
    \label{Theo:Est:Main1}
    \\ 
 \lambda + \frac{\varepsilon^2}{2} - \dfrac{\sigma^2}{2} - C_2\Delta t^{1/2}   \stackrel{\text{a.s.}}{\le}  \limsup_{n\to\infty} \dfrac{1}{t_n} \log | Z_n | \stackrel{\text{a.s.}}{\le}   \lambda + \frac{\varepsilon^2}{2} - \dfrac{\sigma^2}{2} + C_2\Delta t^{1/2}, 
 \label{Theo:Est:Main2}
\end{gather}
for all $0<\Delta t< \Delta t_1$.  Consequently,   
there hold in the interval $(0,\Delta t_1)$ of step size that 
\begin{itemize}
    \item[a)] if 
 $(\lambda,\varepsilon,\sigma)\in \mathbf S^{\mathsf{ms}}$ or $(\lambda,\varepsilon,\sigma)\in \mathbf B^{\mathsf{ms}}$ then $| Z_n |$ is  mean-square exponentially stable, or respectively, mean-square exponentially blowing up with the exponent $\lambda + \varepsilon^2/2 + \sigma^2/2  $; 
    \item[b)] if 
 $(\lambda,\varepsilon,\sigma)\in \mathbf S^{\mathsf{as}}$ or $(\lambda,\varepsilon,\sigma)\in \mathbf B^{\mathsf{as}}$ then $| Z_n |$ is almost-sure exponentially stable, or respectively,  almost-sure exponentially blowing up with the exponent $\lambda + \varepsilon^2/2 - \sigma^2/2$.
\end{itemize} 
\end{theorem}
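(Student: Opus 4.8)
The plan is to use that the one-step multiplier in \eqref{FinalLable} factorises into independent identically distributed terms, which collapses each discrete Lyapunov exponent to a single scalar moment that can then be expanded in $\Delta t$. Put $a:=\lambda+\varepsilon^2/2-\sigma^2/2$ and $\xi_k:=\Delta B_{1,k-1}$, so that $\xi_1,\xi_2,\dots$ are i.i.d.\ $N(0,\Delta t)$ and
\[
|Z_n|=|Z_0|\prod_{k=1}^{n}(1+\eta_k),\qquad \eta_k:=a\,\Delta t+\sigma\,\xi_k+\tfrac{\sigma^2}{2}\,\xi_k^2 .
\]
The decisive elementary remark is the completion of the square
\[
1+\eta_k=\tfrac12+a\,\Delta t+\tfrac12\,(1+\sigma\xi_k)^2\ \ge\ \tfrac12+a\,\Delta t ,
\]
so that for every $\Delta t$ below a threshold depending on $\lambda,\varepsilon,\sigma$ the factors $1+\eta_k$ are a.s.\ strictly positive and bounded away from $0$; hence $\log(1+\eta_k)$ is well defined and, via $\log(\tfrac12+a\Delta t)\le\log(1+\eta_k)\le\eta_k$, integrable.

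For the mean-square bound \eqref{Theo:Est:Main1}, the independence of $|Z_{k-1}|$ from $\eta_k$ gives $\mathbb E|Z_n|^2=|Z_0|^2\big(\mathbb E(1+\eta_1)^2\big)^n$, so that $\limsup_{n}\tfrac{1}{t_n}\log[\mathbb E|Z_n|^2]^{1/2}$ is actually the limit $\tfrac{1}{2\Delta t}\log\mathbb E(1+\eta_1)^2$. Using the exact Gaussian moments $\mathbb E\xi_1^2=\Delta t$ and $\mathbb E\xi_1^4=3\Delta t^2$, one computes $\mathbb E\eta_1=(\lambda+\tfrac{\varepsilon^2}{2})\Delta t$ and $\mathbb E\eta_1^2=\sigma^2\Delta t+O(\Delta t^2)$, whence $\mathbb E(1+\eta_1)^2=1+(2\lambda+\varepsilon^2+\sigma^2)\Delta t+O(\Delta t^2)$; then $\log(1+x)=x+O(x^2)$ and division by $2\Delta t$ give $\lambda+\tfrac{\varepsilon^2}{2}+\tfrac{\sigma^2}{2}+O(\Delta t)$, which is \eqref{Theo:Est:Main1}.

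For the almost-sure bound \eqref{Theo:Est:Main2}, the strong law of large numbers applied to the i.i.d.\ integrable sequence $\{\log(1+\eta_k)\}$ gives $\tfrac1n\sum_{k=1}^n\log(1+\eta_k)\to\mathbb E\log(1+\eta_1)$ a.s., hence $\limsup_{n}\tfrac{1}{t_n}\log|Z_n|=\tfrac{1}{\Delta t}\,\mathbb E\log(1+\eta_1)$ a.s. It then remains to estimate $\mathbb E\log(1+\eta_1)$. I would use the \emph{global} inequality $|\log(1+x)-x+\tfrac{x^2}{2}|\le C|x|^3$, valid for $x\ge-\tfrac23$ and applicable here since $\eta_1\ge-\tfrac12+a\Delta t>-\tfrac23$ for small $\Delta t$, together with $\mathbb E|\eta_1|^3\le C\Delta t^{3/2}$ (the $\sigma\xi_1$ contribution being dominant) and the values $\mathbb E\eta_1=(\lambda+\tfrac{\varepsilon^2}{2})\Delta t$, $\mathbb E\eta_1^2=\sigma^2\Delta t+O(\Delta t^2)$ above; this yields $\mathbb E\log(1+\eta_1)=a\,\Delta t+O(\Delta t^{3/2})$, i.e.\ $\tfrac{1}{\Delta t}\mathbb E\log(1+\eta_1)=\lambda+\tfrac{\varepsilon^2}{2}-\tfrac{\sigma^2}{2}+O(\Delta t^{1/2})$, which is \eqref{Theo:Est:Main2}. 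The statements a)--b) follow at once: on $\mathbf S^{\mathsf{ms}}$ or $\mathbf S^{\mathsf{as}}$ (resp.\ $\mathbf B^{\mathsf{ms}}$ or $\mathbf B^{\mathsf{as}}$) the limiting value $\lambda+\tfrac{\varepsilon^2}{2}\pm\tfrac{\sigma^2}{2}$ is strictly negative (resp.\ strictly positive), so shrinking $\Delta t_1$ so that the error terms $C_1\Delta t$ and $C_2\Delta t^{1/2}$ cannot cross that gap keeps the relevant one-sided bound in \eqref{Theo:Est:Main1}--\eqref{Theo:Est:Main2} strictly negative (resp.\ positive), which forces exponential decay (resp.\ growth) of $|Z_n|$.

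The main obstacle is the sharp control of $\mathbb E\log(1+\eta_1)$: a naive termwise Taylor expansion of $\log(1+\eta_1)$ is illegitimate because $\eta_1$ carries the quadratic term $\tfrac{\sigma^2}{2}\xi_1^2$, which is of order one on the Gaussian event $\{|\xi_1|\sim(\Delta t)^{-1/2}\}$, so $\log(1+\eta_1)$ cannot be linearised uniformly in $\omega$. The completion-of-square identity is exactly what repairs this: it pins $1+\eta_1$ away from $0$ over all of $\Omega$, so the cubic remainder bound for the logarithm holds globally and the error is governed by the genuine moment $\mathbb E|\eta_1|^3=O(\Delta t^{3/2})$. (A finer split of the expectation over $\{|\eta_1|\le\tfrac12\}$ and its complement would in fact lower the rate in \eqref{Theo:Est:Main2} from $\Delta t^{1/2}$ to $\Delta t$, but this is not needed for the stated result.)
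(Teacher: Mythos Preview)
Your proof is correct and follows essentially the same route as the paper: the paper also reduces to the i.i.d.\ product structure, uses the identical completion-of-square identity $1+\eta_k=\tfrac12+a\Delta t+\tfrac12(1+\sigma\xi_k)^2$ to keep the multiplier uniformly bounded away from zero, computes $\mathbb E(1+\eta_1)^2$ exactly for \eqref{Theo:Est:Main1}, and for \eqref{Theo:Est:Main2} invokes the strong law of large numbers together with a cubic/quartic remainder bound for $\log(\gamma+x)$ on $x>-\tfrac23\gamma$ (the paper's Lemma~\ref{Lem:BaIneqn}). The only cosmetic difference is the order of operations: the paper bounds $\log(\gamma_{\Delta t}+x)$ pointwise by separate upper and lower polynomial inequalities (with a piecewise correction $\xi_\gamma$) and then applies the SLLN term-by-term, whereas you apply the SLLN once to $\log(1+\eta_k)$ and then control the single expectation $\mathbb E\log(1+\eta_1)$ via the two-sided bound $|\log(1+x)-x+\tfrac{x^2}{2}|\le C|x|^3$ on $x\ge-\tfrac23$; your packaging is slightly more economical but the content is the same.
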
 

By the above theorem, there exists a small time step $\Delta t_1>0$ such that the Milstein scheme \eqref{FinalLable} preserves the large-time behaviours of mean-square/almost-surely exponential stability of the zero equilibrium from the underlying model \eqref{SDE:System} whenever $0<\Delta t < \Delta t_1$. 

\medskip

Our novelty is estimating the discrete Lyapunov exponent from below that is sharp in the sense  \eqref{SharpSense}. We observe from the second order of the Milstein approximation that there exists a constant $\gamma_0$ such that $$\gamma_0 < \gamma_{\Delta t}:=  1 + \left( \left( \lambda + \frac{\varepsilon^2}{2} \right) - \frac{\sigma^2}{2}  \right)  \Delta t, \quad \text{for all } 0<\Delta t < \Delta t_1, $$ and  
\begin{align*}
    \mathbb{P} \left[   \sigma   \Delta B_{1,n-1} +\frac{\sigma^2}{2}  \Delta B_{1,n-1}^2 \ge -  \gamma_0 \right] =1,      
\end{align*} 
which avoids the singularity of the logarithmic function $x\mapsto \log(\gamma_{\Delta t} + x)$ at $x=-\gamma_{\Delta t}$, see Proposition \ref{Theo:2}. This suggests us to construct a lower bound for this function in the form 
  \begin{align*}
      \log(\gamma_{\Delta t}+x) \geq \log \gamma_{\Delta t} + \dfrac{1}{\gamma_{\Delta t}} x -\dfrac{1}{2\gamma_{\Delta t}^2} x^2 + \xi_{\gamma_{\Delta t}}(x), \quad \text{for all } x \ge -\gamma_0, 
  \end{align*}
with a suitable power function $\xi_{\gamma_{\Delta t}}$ of $x$, see Lemma \ref{Lem:BaIneqn}.

\begin{remark} Focused on the almost-sure sense, if the drift coefficient $\lambda$ is given, the stable and blowing-up regions of the noise parameters $\sigma,\varepsilon$ are shown in Figure \ref{fig4}, where the minus and plus signs correspond to the exponential stable and exponential blowing-up regions, respectively.

\medskip 

If $\lambda=0$, those regions are separated by the lines $\varepsilon=-\sigma$ and $\varepsilon=\sigma$, plotted in blue. If $\lambda>0$, they are separated by horizontal hyperbolae, which are wider as $\lambda$ increases, while it corresponds to vertical hyperbolae if $\lambda<0$.

\medskip

We notice that points lay on the curves or (blue) lines correspond to the zero value of the continuum Lyapunov exponent, i.e., the continuum solution is not stable or blowing up. However, estimate \eqref{Theo:Est:Main2} does not say that the discrete Lyapunov exponent is positive, negative, or zero. Thus, a conclusion about the large-time behaviour of the discrete solution is not apparent. 
 
% \begin{figure}[H]
% \begin{center}
% \begin{subfigure}{.4\linewidth}
% \centering
% \includegraphics[width=\textwidth]{Milstein/figure/eps_mienondinh.jpeg}
% %\caption{ $(\lambda,\varepsilon,\sigma)=(6,05,4)$}
% \label{fig:sub_pde3a}
% \end{subfigure}%
% \vspace{0.2cm}
% \begin{subfigure}{.4\linewidth}
% \centering
% \includegraphics[width=\textwidth]{Milstein/figure/sig_mienondinh.jpeg}
% %\caption{ $(\lambda,\varepsilon,\sigma)=(8,2,4)$}
% \label{fig:sub_pde3b}
% \end{subfigure}%
% \caption{... of $| Z_n |$ at infinite time with various parameters}
% \label{fig:pde5}
% \end{center}
% \end{figure}
\begin{figure}[H]
\centering
\includegraphics[width=8cm, height=8cm]{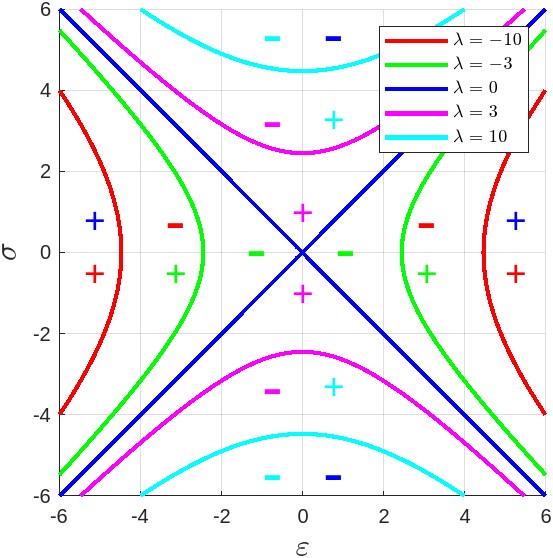}
    \caption{Stable and blowing-up regions of noise parameters $\sigma,\varepsilon$ given drift coefficient $\lambda$.}
\label{fig4}
\end{figure}    
\end{remark}
 
This paper is organised as follows. Proof of Theorem \ref{Theo:Main} is obtained by combining Propositions \ref{Theo:1}-\ref{Theo:2}, which will be presented in Section 2. In Section 3, we will point out that the proposed method also covers the setting for linear stochastic differential equations
as well as the $\theta$-Milstein scheme’s setting. Finally, the last section demonstrates some numerical simulations to illustrate the main result in the almost-sure sense.

\section{Exponential stability versus exponential blow-up}

\subsection{Mean-square exponential stability}
 
In this part, we show the exponential stability of the zero equilibrium of \eqref{FinalLable} versus the mean-square exponential blow-up (at infinite time) in the mean-square sense, which will be presented in Proposition \ref{Theo:1} below. For the sake of convenience, we denote 
\begin{align}
    \gamma_{\Delta t} &:= 1 + \left( \lambda + \frac{\varepsilon^2}{2} - \frac{\sigma^2}{2}  \right)  \Delta t, 
    \label{gammaDelta}\\
    \mu &:= \lambda ^2 + \lambda \varepsilon^2+ \frac{\varepsilon^4}{4} +\frac{\sigma^4}{2}.
    \label{mu}
\end{align}
Since the numerical step size $\Delta t$ will be chosen sufficiently small, we can assume without loss of generality that $\gamma_{\Delta t}$ is strictly positive throughout this paper.

\begin{proposition}[Exponential stability versus exponential blow-up in the mean-square sense]
\label{Theo:1}
Let $\{Z_n\}$ be the solution to the Milstein scheme \eqref{FinalLable} with a step size $0<\Delta t<1$. Then, it holds   
\begin{align}
\limsup_{n\to\infty} \frac{1}{t_n} \log \mathbb{E}(Z_n^2) 
  = \,  2\left( \lambda + \frac{\varepsilon^2}{2} + \frac{\sigma^2}{2} \right)   + \mathbf{R}_{\lambda,\varepsilon,\sigma}^{\mathsf{ms}}(\Delta t),
  \label{Limit:1}
\end{align}  
where  the remainder   $\mathbf{R}_{\lambda,\varepsilon,\sigma}^{\mathsf{ms}}(\Delta t)$ is formulated as
\begin{align}
\begin{aligned}
\mathbf{R}_{\lambda,\varepsilon,\sigma}^{\mathsf{ms}}(\Delta t) :=  \mu \Delta t   + \sum_{m=2}^{\infty} \frac{(-1)^{m-1}}{m} \left[ 2 \left(\lambda + \frac{\varepsilon^2}{2} + \frac{\sigma^2}{2} \right) +  \mu \Delta t \right]^m \Delta t^{m-1}.
\label{Rem2}
\end{aligned}
\end{align}
Moreover, there exists $\Delta t_\dagger>0$ sufficiently small such that, for all $0<\Delta t<\Delta t_\dagger$,
 \begin{align*} 
\left|\mathbf{R}_{\lambda,\varepsilon,\sigma}^{\mathsf{ms}}(\Delta t) \right|
 \le  \left( \mu  +  \frac{ \big[ 2|\lambda +  \frac{\varepsilon^2}{2} + \frac{\sigma^2}{2} |+\mu \Delta t \big]^2}{ 1 - \big[ 2|\lambda + \frac{\varepsilon^2}{2} + \frac{\sigma^2}{2} |+\mu \Delta t \big] \Delta t } \right) \Delta t .
\end{align*}
\end{proposition}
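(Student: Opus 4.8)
The plan is to exploit the multiplicative, independent‑and‑identically‑distributed (i.i.d.) structure hidden in \eqref{FinalLable}. Writing
\[
\xi_k := 1 + \Big(\lambda + \tfrac{\varepsilon^2}{2} - \tfrac{\sigma^2}{2}\Big)\Delta t + \sigma\,\Delta B_{1,k-1} + \tfrac{\sigma^2}{2}\,\Delta B_{1,k-1}^2 ,
\]
one has $|Z_n| = |Z_0|\prod_{k=1}^{n}\xi_k$. The increments $\Delta B_{1,0},\Delta B_{1,1},\dots$ are taken over the disjoint intervals $[t_0,t_1],[t_1,t_2],\dots$, hence are independent and each $\mathcal N(0,\Delta t)$‑distributed, so $\xi_1,\xi_2,\dots$ are i.i.d. Because $\xi_1$ is a non‑degenerate polynomial in a Gaussian variable, $\mathbb{E}\xi_1^2>0$, and therefore
\[
\mathbb{E}|Z_n|^2 = |Z_0|^2\big(\mathbb{E}\xi_1^2\big)^n, \qquad \frac{1}{t_n}\log\mathbb{E}|Z_n|^2 = \frac{\log|Z_0|^2}{n\Delta t} + \frac{\log\mathbb{E}\xi_1^2}{\Delta t}.
\]
Since $|Z_0|>0$ by the nontrivial initial datum \eqref{InitialData}, letting $n\to\infty$ kills the first term; thus the $\limsup$ in \eqref{Limit:1} is in fact a genuine limit and equals $\tfrac{1}{\Delta t}\log\mathbb{E}\xi_1^2$.

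Next I would compute $\mathbb{E}\xi_1^2$ explicitly, using the Gaussian moments $\mathbb{E}\Delta B_{1,0}=\mathbb{E}\Delta B_{1,0}^3=0$, $\mathbb{E}\Delta B_{1,0}^2=\Delta t$, $\mathbb{E}\Delta B_{1,0}^4=3\Delta t^2$. Expanding the square of $\xi_1 = \bigl(1+(\lambda+\tfrac{\varepsilon^2}{2}-\tfrac{\sigma^2}{2})\Delta t\bigr) + \sigma\Delta B_{1,0} + \tfrac{\sigma^2}{2}\Delta B_{1,0}^2$ and taking expectations gives
\[
\mathbb{E}\xi_1^2 = \Big(1+\big(\lambda+\tfrac{\varepsilon^2}{2}-\tfrac{\sigma^2}{2}\big)\Delta t\Big)^2 + \Big(2+\big(\lambda+\tfrac{\varepsilon^2}{2}-\tfrac{\sigma^2}{2}\big)\Delta t\Big)\sigma^2\Delta t + \tfrac{3\sigma^4}{4}\Delta t^2 .
\]
Collecting powers of $\Delta t$, the linear coefficient is $2\big(\lambda+\tfrac{\varepsilon^2}{2}+\tfrac{\sigma^2}{2}\big)$, while the algebraic identity $\big(\lambda+\tfrac{\varepsilon^2}{2}-\tfrac{\sigma^2}{2}\big)^2 + \big(\lambda+\tfrac{\varepsilon^2}{2}-\tfrac{\sigma^2}{2}\big)\sigma^2 + \tfrac{3\sigma^4}{4} = \big(\lambda+\tfrac{\varepsilon^2}{2}\big)^2+\tfrac{\sigma^4}{2} = \mu$ shows the quadratic coefficient is exactly $\mu$. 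Hence $\mathbb{E}\xi_1^2 = 1+u$ with $u := \big(2(\lambda+\tfrac{\varepsilon^2}{2}+\tfrac{\sigma^2}{2}) + \mu\Delta t\big)\Delta t$.

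For $\Delta t$ small enough that $|u|<1$, I would insert the power series $\log(1+u)=\sum_{m\ge 1}\tfrac{(-1)^{m-1}}{m}u^m$, divide by $\Delta t$, and split off the $m=1$ term, which is $2(\lambda+\tfrac{\varepsilon^2}{2}+\tfrac{\sigma^2}{2})+\mu\Delta t$; using $u^m=\big(2(\lambda+\tfrac{\varepsilon^2}{2}+\tfrac{\sigma^2}{2})+\mu\Delta t\big)^m\Delta t^m$ turns the remaining tail into precisely the series \eqref{Rem2}, establishing \eqref{Limit:1}. For the quantitative bound, set $A := 2\big|\lambda+\tfrac{\varepsilon^2}{2}+\tfrac{\sigma^2}{2}\big|+\mu\Delta t$ (note $\mu\ge 0$); since $\tfrac1m\le 1$ and $\big|2(\lambda+\tfrac{\varepsilon^2}{2}+\tfrac{\sigma^2}{2})+\mu\Delta t\big|\le A$,
\[
\big|\mathbf R^{\mathsf{ms}}_{\lambda,\varepsilon,\sigma}(\Delta t)\big| \le \mu\Delta t + \sum_{m=2}^{\infty}A^m\Delta t^{m-1} = \mu\Delta t + \frac{A^2\Delta t}{1-A\Delta t} = \Big(\mu + \frac{A^2}{1-A\Delta t}\Big)\Delta t ,
\]
the geometric series converging once $A\Delta t<1$. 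Taking $\Delta t_\dagger := \min\{1,\,(1+2|\lambda+\tfrac{\varepsilon^2}{2}+\tfrac{\sigma^2}{2}|+\mu)^{-1}\}$ secures both $A\Delta t<1$ and $|u|<1$ for all $0<\Delta t<\Delta t_\dagger$, which is the claimed estimate. The argument is entirely elementary; the only points requiring care are recognising the exact i.i.d.\ collapse of $\mathbb{E}|Z_n|^2$, verifying the cancellation that produces $\mu$, and restricting the step size so that the logarithmic series converges — no genuine obstacle arises beyond this bookkeeping.
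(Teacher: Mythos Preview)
Your argument is correct and follows essentially the same route as the paper: exploit the i.i.d.\ product structure to reduce $\mathbb{E}|Z_n|^2$ to a single-step second moment raised to the $n$th power, compute that moment via Gaussian moments to obtain $1+(2\lambda+\varepsilon^2+\sigma^2)\Delta t+\mu\Delta t^2$, expand $\log(1+u)$ as a power series, and bound the tail by a geometric series. Your identification of the quadratic coefficient via $(c+\sigma^2/2)^2+\sigma^4/2=\mu$ is exactly the cancellation the paper uses, and your explicit $\Delta t_\dagger$ is a valid (slightly more concrete) choice than the paper's existential one.
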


\begin{proof} By taking the square of the Milstein scheme's solution \eqref{FinalLable}, we get 
    \begin{align} 
         Z_n^2 = ( x_0^2 + y_0^2 )\prod_{k=0}^{n-1} \left[ \gamma_{\Delta t}^2  + 2 \gamma_{\Delta t}   \left (\sigma   \Delta B_{1,k} +\frac{\sigma^2}{2}  \Delta B_{1,k}^2 \right )  + \left ( \sigma   \Delta B_{1,k} +\frac{\sigma^2}{2}  \Delta B_{1,k}^2\right )^2 \right] , 
    \label{Theo:1:P1}
\end{align} 
where $\gamma_{\Delta t}$ was defined at \eqref{gammaDelta}. We recall the basic expectations  $$\mathbb{E}[(\Delta B_{1,k})^{2i+1}]=0 \quad \text{and} \quad \mathbb{E}[(\Delta B_{1,k})^{2i}]=(2i-1)!! (\Delta t)^i, \quad i\in \mathbb{N},i\ge 1,$$ 
which directly show that 
$$\mathbb{E} \left (\sigma   \Delta B_{1,k} + \frac{\sigma^2}{2} \Delta B_{1,k}^2 \right )= \frac{\sigma^2}{2}\Delta t$$ and $$\mathbb{E}\left[\left(\sigma   \Delta B_{1,k} + \frac{\sigma^2}{2}  \Delta B_{1,k}^2\right)^2\right]=\sigma^2\Delta t + \frac{3\sigma^4}{4} \Delta t^2.$$  
Hence, the expectation of each  product's factor in \eqref{Theo:1:P1} is given by 
\begin{align*}
    & \mathbb{E} \left[ \gamma_{\Delta t}^2  + 2 \gamma_{\Delta t} \left ( \sigma   \Delta B_{1,k} +\frac{\sigma^2}{2}  \Delta B_{1,k}^2 \right)  +  \left (\sigma   \Delta B_{1,k} +\frac{\sigma^2}{2}  \Delta B_{1,k}^2\right)^2 \right]\\  & =  \gamma_{\Delta t}^2 + \gamma_{\Delta t} \sigma^2\Delta t  + \sigma^2 \Delta t+\frac{3\sigma^4}{4}  \Delta t^2, \quad \text{for all }  k=0,1,\dots,n-1,  
 \end{align*}
and all product factors' expectations in \eqref{Theo:1:P1} are the same. 
Therefore,  
\begin{align}
     \begin{aligned}
         \mathbb{E}(Z_n^2)  
         %& =   (x_0^2+y_0^2)   \left(1 + \gamma_{\Delta t}^2 + \gamma_{\Delta t}  \sigma^2 \Delta t  + \sigma^2 \Delta t+\frac{3\sigma^4}{4} \Delta t^2 \right)^n \\
     & =  (x_0^2 +y_0^2)  \left[ 1+ (2\lambda +\varepsilon^2 + \sigma^2 ) \Delta t + \mu \Delta t^2\right]^n, 
     \end{aligned} 
    \label{Form:Induction}
\end{align}
where $\mu$ is defined at \eqref{mu}. 

\medskip

We note that the base $1+ (2\lambda + \varepsilon^2 + \sigma^2 ) \Delta t + 
\mu \Delta t^2$ is nonnegative due to taking the square of $| Z_n |$. To estimate the mean-square Lyapunov exponent, we restrict $\Delta t$ to an interval $(0,\widetilde{\Delta t})$ for a small $\widetilde{\Delta t}>0$ such that this base is strictly positive. Then, taking the logarithms of two sides of the latter expression and using the Taylor expansion of the logarithmic function $x\mapsto \log(1+x)$ for $x>-1$, we obtain
\begin{align*}
    \log \mathbb{E}(Z_n^2)  =  \log  (x_0^2+y_0^2)  + n \sum_{m=1}^{\infty} \frac{(-1)^{m-1}}{m} \left[ 2 \left(\lambda +\frac{\varepsilon^2}{2} + \frac{\sigma^2}{2} \right) \Delta t   +  \mu \Delta t^2 \right]^m.
\end{align*}
This subsequently gives
\begin{align*}
\limsup_{n\to\infty} \frac{1}{t_n} \log \mathbb{E}(Z_n^2) 
& =  \dfrac{1}{\Delta t} \sum_{m=1}^{\infty} \frac{(-1)^{m-1}}{m} \left[ 2 \left(\lambda + \frac{\varepsilon^2}{2} +\frac{\sigma^2}{2} \right) \Delta t +
 \mu \Delta t^2 \right]^m\\
&=  \sum_{m=1}^{\infty} \frac{(-1)^{m-1}}{m} \left[ 2 \left(\lambda + \frac{\varepsilon^2}{2} + \frac{\sigma^2}{2} \right) + \mu \Delta t \right]^m \Delta t^{m-1} ,
\end{align*}
and then,
\begin{align*}
\limsup_{n\to\infty} \frac{1}{t_n} \log \mathbb{E}(Z_n^2) = 2 \left( \lambda + \frac{\varepsilon^2}{2} + \frac{\sigma^2}{2} \right) + \mathbf{R}_{\lambda,\varepsilon,\sigma}^{\mathsf{ms}}(\Delta t) 
\end{align*}
in which the remainder $\mathbf{R}_{\lambda,\varepsilon,\sigma}^{\mathsf{ms}}(\Delta t)$ was defined at \eqref{Rem2}. Now, we only have to estimate the remainder to complete the proof. Using the triangle inequality, 
\begin{align*}
\left|\mathbf{R}_{\lambda,\varepsilon,\sigma}^{\mathsf{ms}}(\Delta t) \right|  
%\le & \left( \lambda ^2 +\frac{\sigma^4}{2}\right) \Delta t +  \left( 2 \left| \lambda + \frac{\sigma^2}{2} \right|+ \left(\lambda ^2 +\frac{\sigma^4}{2} \right) \Delta t \right)  \sum_{m=2}^{\infty} \frac{1}{m} \left[ \left( 2 \left| \lambda + \frac{\sigma^2}{2} \right|+ \left(\lambda ^2 +\frac{\sigma^4}{2} \right) \Delta t \right) \Delta t \right]^{m-1} \\
 &\le \mu \Delta t  + \left( 2 \left| \lambda + \frac{\varepsilon^2}{2} + \frac{\sigma^2}{2} \right|+ \mu \Delta t \right)   \sum_{m=2}^{\infty} \left[ \left( 2 \left| \lambda + \frac{\varepsilon^2}{2} + \frac{\sigma^2}{2} \right|+ \mu \Delta t \right) \Delta t \right]^{m-1},
\end{align*}
and thus,
\begin{align*}
\left|\mathbf{R}_{\lambda,\varepsilon,\sigma}^{\mathsf{ms}}(\Delta t) \right| \le   \mu \Delta t   + \frac{ \big[ 2|\lambda + \frac{\varepsilon^2}{2} + \frac{\sigma^2}{2} |+\mu \Delta t \big]^2}{ 1 - \big[ 2|\lambda + \frac{\varepsilon^2}{2} + \frac{\sigma^2}{2} |+\mu \Delta t \big] \Delta t }  \Delta t ,
\end{align*}
where we chose $\Delta t_\dagger$ small enough such that $0<\Delta t_\dagger<\widetilde{\Delta t}$ and  $$\left( 2\left|\lambda + \frac{\varepsilon^2}{2}+ \frac{\sigma^2}{2} \right|+\mu \Delta t \right) \Delta t<1$$ for all $0<\Delta t <\Delta t_\dagger$ to have the convergence of the latter infinite series. 
\end{proof}

\begin{remark} In \cite[Section 4]{higham2000mean}, the stability versus the blow-up in the mean-square sense was claimed by respectively arguing from  \eqref{Form:Induction} in two cases that the value of $$1+ (2\lambda +\varepsilon^2 + \sigma^2 ) \Delta t + \mu \Delta t^2$$ is strictly less or greater than $1$, where, however, an exponential rate of the stability or the blow-up had not been established.
\end{remark}

\subsection{Almost-sure exponential stability}

This part is to study the almost-sure exponential stability of the zero equilibrium of \eqref{SDE:System} versus its almost-sure exponential blow-up. We first need the lower and upper bounds for a logarithmic function in the following lemma, where the proof of this lemma will be put in the appendix.  

\begin{lemma}
\label{Lem:BaIneqn}
Given $\gamma>0$. There hold that  
\begin{align}
    \log(\gamma+x) \leq \log  \gamma  + \frac{1}{\gamma} x   -\frac{1}{2\gamma^2} x^2 +\frac{1}{3\gamma^3} x^3 \quad \text{for all } x > - \gamma, 
    \label{BaIneqn1}
\end{align}
and  
    \begin{align}
         \log(\gamma+x) \geq \log \gamma + \dfrac{1}{\gamma} x -\dfrac{1}{2\gamma^2} x^2 + \xi_\gamma(x)  \quad \text{for all } x > - \frac{2}{3} \gamma ,
    \label{BaIneqn2}
    \end{align}
where 
\begin{align*}
    \xi_\gamma(x):=  
    \left\{ \begin{array}{llrll}
    %\dfrac{x^3}{3\gamma^3}
    - \dfrac{x^4}{4\gamma^4} & \text{if}& x \ge 0,
    \vspace{0.15cm}\\
    %%%5
    \dfrac{9x^3}{\gamma^3}  & \text{if}&   - \dfrac{2}{3} \gamma < x < 0.  
    \end{array} \right.
\end{align*}
\end{lemma}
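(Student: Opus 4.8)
The plan is to first collapse the parameter $\gamma$ by the substitution $x=\gamma t$. Since $\log(\gamma+x)=\log\gamma+\log(1+t)$ and every term on the right-hand sides of \eqref{BaIneqn1}--\eqref{BaIneqn2} is homogeneous in $x/\gamma$ (note $\xi_\gamma(\gamma t)=\xi_1(t)$ in both branches), the two claims reduce to the one-variable inequalities $\log(1+t)\le t-\tfrac{t^2}{2}+\tfrac{t^3}{3}$ for $t>-1$ and $\log(1+t)\ge t-\tfrac{t^2}{2}+\xi_1(t)$ for $t>-\tfrac{2}{3}$, where $\xi_1(t)=-\tfrac{t^4}{4}$ for $t\ge 0$ and $\xi_1(t)=9t^3$ for $-\tfrac23<t<0$. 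The first of these is exactly \eqref{LogIneq}, so I would either cite it or re-derive it in one line: the function $t\mapsto t-\tfrac{t^2}{2}+\tfrac{t^3}{3}-\log(1+t)$ has derivative $\tfrac{t^3}{1+t}$ on $(-1,\infty)$, hence a global minimum equal to $0$ at $t=0$, which gives \eqref{BaIneqn1}.

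For \eqref{BaIneqn2} I would set $g(t):=\log(1+t)-t+\tfrac{t^2}{2}$ and compute $g'(t)=\tfrac{1}{1+t}-1+t=\tfrac{t^2}{1+t}\ge 0$ on $(-1,\infty)$, so $g$ is nondecreasing with $g(0)=0$; in particular $g(t)\ge 0$ for $t\ge 0$ and $g(t)\le 0$ for $-1<t<0$. On the branch $t\ge 0$ the inequality is then immediate, since $g(t)\ge 0\ge -\tfrac{t^4}{4}=\xi_1(t)$. The content of the lemma therefore lives entirely on the branch $-\tfrac23<t<0$, where $g(t)$ and $\xi_1(t)=9t^3$ are both negative and must be compared carefully.

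On that branch I would introduce $h(t):=g(t)-9t^3$ and differentiate: $h'(t)=\tfrac{t^2}{1+t}-27t^2=\tfrac{t^2(-26-27t)}{1+t}$. For $-\tfrac23<t<0$ one has $1+t>\tfrac13>0$, and $-26-27t$ ranges between $-26$ (at $t=0$) and $-8$ (at $t=-\tfrac23$), hence stays strictly negative; thus $h'(t)<0$ on $(-\tfrac23,0)$, so $h$ is strictly decreasing there, and since $h(0)=0$ we conclude $h(t)>0$, i.e. $g(t)>9t^3=\xi_1(t)$, as required. Reversing the substitution $t=x/\gamma$ then yields \eqref{BaIneqn1}--\eqref{BaIneqn2} for general $\gamma>0$.

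The only real obstacle is the negative branch: one must choose the cubic coefficient ($9$ here) and the cutoff ($-\tfrac23\gamma$) compatibly, so that after subtracting $9t^3$ the sign of $h'$ is governed by the linear factor $-26-27t$ remaining negative on the entire interval $(-\tfrac23,0)$ — the cutoff $-\tfrac23$ is exactly what keeps this factor away from its root at $t=-\tfrac{26}{27}$ while also keeping $1+t$ bounded below. Everything else is a routine monotonicity argument anchored at the boundary value $t=0$, so no further subtlety is expected.
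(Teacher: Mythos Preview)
Your proof is correct. Both you and the paper prove the inequalities by studying the sign of the difference function via monotonicity anchored at the point $x=0$ (resp.\ $t=0$), so the strategy is the same, but your execution is noticeably more economical. The normalisation $x=\gamma t$ removes $\gamma$ from all computations, and your handling of the two branches of \eqref{BaIneqn2} is sharper than the paper's: on $t\ge 0$ you observe that $g(t)=\log(1+t)-t+\tfrac{t^2}{2}\ge 0$ already dominates $-\tfrac{t^4}{4}$, so no quartic analysis is needed, whereas the paper differentiates four times; on $-\tfrac23<t<0$ your single differentiation with the factorisation $h'(t)=\dfrac{t^2(-26-27t)}{1+t}$ settles the sign in one line, while the paper climbs to the third derivative. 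What the paper's version buys is that it never relies on the specific tightness of the constant $9$ and the cutoff $-\tfrac23\gamma$---it just grinds through derivatives---whereas your argument makes transparent \emph{why} those two constants fit together (the linear factor $-26-27t$ stays negative precisely because $-\tfrac23>-\tfrac{26}{27}$), which is a nice bonus.
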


The next lemma is to estimate the expectation of $\xi_{\gamma_{\Delta t}}(\sigma   \Delta B_{1,k} + (\sigma^2/2) \Delta B_{1,k}^2)$, which we will exploit in Proposition \ref{Theo:2}. 

\begin{lemma}
\label{Lem:BaExp} Let $\xi_\gamma$ be the function defined by Lemma \ref{Lem:BaIneqn}. Then, it holds that 
    \begin{align}
        \mathbb{E} \left[ \xi_{\gamma_{\Delta t}}\left( \sigma   \Delta B_{1,k} +\frac{\sigma^2}{2}  \Delta B_{1,k}^2 \right) \right] \ge -  C \left( \frac{|\sigma \sqrt{\Delta t}|^3}{\gamma_{\Delta t}^3} +  \frac{|\sigma \sqrt{\Delta t}|^4}{\gamma_{\Delta t}^4} \right) ,  
        \label{Lem:BaExp:S}
    \end{align}
    for all $k \in \mathbb{N}$, $k\ge 0$.
\end{lemma}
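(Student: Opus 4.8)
The plan is to study the random variable
\[
W := \sigma\,\Delta B_{1,k} + \frac{\sigma^2}{2}\,\Delta B_{1,k}^2 .
\]
First I would complete the square, writing $W = \frac{\sigma^2}{2}\bigl(\Delta B_{1,k}+\sigma^{-1}\bigr)^2 - \frac12$ when $\sigma\neq 0$ (the case $\sigma=0$ being trivial), so that $W\ge -\frac12$ almost surely. Since the step size is taken in the standing small-step range, where $\gamma_{\Delta t}$ is close to $1$ and in particular $\frac23\gamma_{\Delta t}>\frac12$, we get $W>-\frac23\gamma_{\Delta t}$ a.s.; hence $\xi_{\gamma_{\Delta t}}(W)$ is a.s.\ well defined, and on $\{W\ge 0\}$ it equals $-W^4/(4\gamma_{\Delta t}^4)$ while on $\{W<0\}$ it a.s.\ equals $9W^3/\gamma_{\Delta t}^3$.

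Splitting the expectation by the sign of $W$ and using the two branches of $\xi_{\gamma_{\Delta t}}$ from Lemma~\ref{Lem:BaIneqn},
\[
\mathbb{E}\bigl[\xi_{\gamma_{\Delta t}}(W)\bigr]
= -\frac{1}{4\gamma_{\Delta t}^4}\,\mathbb{E}\bigl[W^4\mathbf{1}_{\{W\ge 0\}}\bigr]
+ \frac{9}{\gamma_{\Delta t}^3}\,\mathbb{E}\bigl[W^3\mathbf{1}_{\{W< 0\}}\bigr].
\]
For the first term I would drop the indicator; for the second I would use $W^3\mathbf{1}_{\{W<0\}}\ge -|W|^3$. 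This gives
\[
\mathbb{E}\bigl[\xi_{\gamma_{\Delta t}}(W)\bigr] \ge -\frac{1}{4\gamma_{\Delta t}^4}\,\mathbb{E}[W^4] - \frac{9}{\gamma_{\Delta t}^3}\,\mathbb{E}[|W|^3],
\]
so the lemma reduces to the moment bounds $\mathbb{E}[W^4]\le C\,|\sigma\sqrt{\Delta t}|^4$ and $\mathbb{E}[|W|^3]\le C\,|\sigma\sqrt{\Delta t}|^3$.

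To obtain these I would write $\Delta B_{1,k}=\sqrt{\Delta t}\,g$ with $g$ standard normal and set $s:=\sigma\sqrt{\Delta t}$, so that $|W|\le |s|\,|g| + \frac{s^2}{2}g^2$. Expanding $\bigl(|s||g|+\frac{s^2}{2}g^2\bigr)^3$ and $\bigl(|s||g|+\frac{s^2}{2}g^2\bigr)^4$ by the binomial theorem yields only terms of the form $c_j\,|s|^{j}|g|^{j}$ with $j\ge 3$, respectively $j\ge 4$, and numerical $c_j$; taking expectations, using that $\mathbb{E}|g|^j<\infty$ for all $j$, and using $|s|=|\sigma|\sqrt{\Delta t}\le 1$ on the step-size range under consideration (so that $|s|^j\le |s|^3$, resp.\ $|s|^4$), one gets both moment bounds with an explicit constant $C$. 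Substituting back gives $\mathbb{E}[\xi_{\gamma_{\Delta t}}(W)]\ge -\frac{C}{4\gamma_{\Delta t}^4}|s|^4 - \frac{9C}{\gamma_{\Delta t}^3}|s|^3$, which is \eqref{Lem:BaExp:S} after renaming the constant.

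I do not expect a genuine obstacle here: once the sign split is performed, the proof is elementary Gaussian moment bookkeeping. The only points needing a little care are (i) the a.s.\ lower bound $W\ge-\frac12$, which is what makes $\xi_{\gamma_{\Delta t}}(W)$ meaningful and confines $\{W<0\}$ to the negative branch's domain; and (ii) the observation that the negative branch really does contribute at order $|\sigma\sqrt{\Delta t}|^3$ and no better (since $\mathbb{P}[W<0]$ is bounded away from $0$ and $|W|\sim|\sigma\sqrt{\Delta t}|\,|g|$ there), so that both the $|\sigma\sqrt{\Delta t}|^3/\gamma_{\Delta t}^3$ and $|\sigma\sqrt{\Delta t}|^4/\gamma_{\Delta t}^4$ terms must be kept, and one should not replace $\gamma_{\Delta t}$ by a constant in the denominators — harmless since only a lower bound is wanted.
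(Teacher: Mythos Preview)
Your proposal is correct and follows essentially the same route as the paper: split the expectation according to the sign of $W$, pass to the standard normal via $\Delta B_{1,k}=\sqrt{\Delta t}\,g$, bound $|W|$ by $|s||g|+\tfrac{s^2}{2}g^2$, and reduce to finite Gaussian moments with the small-step assumption $|s|=|\sigma|\sqrt{\Delta t}$ bounded. The only cosmetic difference is that the paper writes out the two pieces as explicit integrals against $e^{-y^2/2}/\sqrt{2\pi}$ before applying the same polynomial bounds, whereas you work directly with expectations; your version is in fact a bit cleaner.
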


\begin{proof} By the definition of the function $\xi_{\gamma}$ in Lemma \ref{Lem:BaIneqn}, we have 
    \begin{align*}
     \xi_{\gamma_{\Delta t}}\left( \sigma   \Delta B_{1,k} +\frac{\sigma^2}{2}  \Delta B_{1,k}^2 \right)    = \left\{ \begin{array}{llrll}
     - \dfrac{1}{4\gamma_{\Delta t}^4} \Big(\sigma   \Delta B_{1,k} +\dfrac{\sigma^2}{2}  \Delta B_{1,k}^2\Big)^4 & \text{if}& \omega \in \Omega_1,
    \vspace{0.15cm}\\
    %%%5
    \dfrac{9}{\gamma_{\Delta t}^3} \Big(\sigma   \Delta B_{1,k} +\dfrac{\sigma^2}{2}  \Delta B_{1,k}^2\Big)^3  & \text{if}& \omega \in \Omega_2,  
    \end{array} \right.  
\end{align*}
where $\Omega_1$ and $\Omega_2$ are subsets of the sample space $\Omega$, defined by
\begin{gather*}
    \Omega_1 := \left\{ \omega \in \Omega \, \bigg| \, \sigma   \Delta B_{1,k} +\frac{\sigma^2}{2}  \Delta B_{1,k}^2 \ge 0  \right\}, \\
    \Omega_2 := \left\{ \omega \in \Omega \, \bigg| \,- \dfrac{2}{3} \gamma_{\Delta t} < \sigma   \Delta B_{1,k} +\frac{\sigma^2}{2}  \Delta B_{1,k}^2 < 0  \right\}.
\end{gather*}
Let us denote $\zeta_k := \frac{\Delta B_{1,k}}{\sqrt{\Delta t}} $ for each $k\ge 0$. Then, $\zeta_k \sim N(0,1)$ (i.e., a standard normal distribution), and therefore, 
\begin{align*}
    \text{LHS } \eqref{Lem:BaExp:S}  = &\frac{9}{\gamma_{\Delta t}^3\sqrt{2\pi}} \int_{ \left\{   -\frac{2}{3} \gamma_{\Delta t} < (\sigma \sqrt{\Delta t}) y +\frac{(\sigma \sqrt{\Delta t})^2}{2}  y^2 < 0  \right\} } \left( (\sigma \sqrt{\Delta t})  y +\frac{(\sigma \sqrt{\Delta t})^2}{2}  y^2 \right)^3 e^{-\frac{y^2}{2}} dy \\ 
    & - \frac{1}{4\gamma_{\Delta t}^4\sqrt{2\pi}} \int_{\left\{    (\sigma \sqrt{\Delta t})  y +\frac{(\sigma \sqrt{\Delta t})^2}{2}  y^2 \ge 0  \right\}} \left( (\sigma \sqrt{\Delta t})  y +\frac{(\sigma \sqrt{\Delta t})^2}{2}  y^2 \right)^4 e^{-\frac{y^2}{2}} dy .
\end{align*}
By taking the absolute value, this shows 
\begin{align*}
    \text{LHS } \eqref{Lem:BaExp:S}    \ge & -  \frac{9}{\gamma_{\Delta t}^3\sqrt{2\pi}} \int_{ \left\{-   \frac{2}{3} \gamma_{\Delta t} < (\sigma \sqrt{\Delta t}) y +\frac{(\sigma \sqrt{\Delta t})^2}{2}  y^2 < 0  \right\} } \left( |\sigma \sqrt{\Delta t}  \, y | +\frac{(\sigma \sqrt{\Delta t})^2}{2}  y^2 \right)^3 e^{-\frac{y^2}{2}} dy \\ 
    & - \frac{1}{4\gamma_{\Delta t}^4\sqrt{2\pi}} \int_{\left\{    (\sigma \sqrt{\Delta t})  y +\frac{(\sigma \sqrt{\Delta t})^2}{2}  y^2 \ge 0  \right\}} \left( |\sigma \sqrt{\Delta t}  \, y| +\frac{(\sigma \sqrt{\Delta t})^2}{2}  y^2 \right)^4 e^{-\frac{y^2}{2}} dy \\
    \ge & -  \frac{9C_\sigma|\sigma \sqrt{\Delta t}|^3}{\gamma_{\Delta t}^3\sqrt{2\pi}} \int_{-\infty}^{\infty} (|wy|^3 + y^6) e^{-\frac{y^2}{2}} dy  - \frac{C_\sigma|\sigma \sqrt{\Delta t}|^4}{4\gamma_{\Delta t}^4\sqrt{2\pi}} \int_{-\infty}^{\infty} (y^4+y^8) e^{-\frac{y^2}{2}} dy \\
    \ge & -  \frac{9C_\sigma|\sigma \sqrt{\Delta t}|^3}{\gamma_{\Delta t}^3\sqrt{2\pi}} \int_{-\infty}^{\infty} \left(1 + \frac{5}{4}
 y^6 \right) e^{-\frac{y^2}{2}} dy  - \frac{C_\sigma|\sigma \sqrt{\Delta t}|^4}{4\gamma_{\Delta t}^4\sqrt{2\pi}} \int_{-\infty}^{\infty} (y^4+y^8) e^{-\frac{y^2}{2}} dy.
\end{align*}
Consequently, we get
\begin{align*}
    \text{LHS } \eqref{Lem:BaExp:S}    & \ge  -  \frac{9C_\sigma|\sigma \sqrt{\Delta t}|^3}{\gamma_{\Delta t}^3} \left( 1 + \frac{5}{6} \mathbb{E}(\zeta_k^6) \right)   - \frac{C_\sigma|\sigma \sqrt{\Delta t}|^4}{4\gamma_{\Delta t}^4}  \left( \mathbb{E}(\zeta_k^4) + \mathbb{E}(\zeta_k^8) \right) \\
    & = -  \frac{9C_\sigma|\sigma \sqrt{\Delta t}|^3}{\gamma_{\Delta t}^3} \times \dfrac{81}{6}  - \frac{C_\sigma|\sigma \sqrt{\Delta t}|^4}{4\gamma_{\Delta t}^4} \times 108 , 
\end{align*}
where we recall that $E(\zeta_k^{2m-1})=0$ and $E(\zeta_k^{2m})=(2m-1)!!$ for all $k\ge 0$ and $m\in \mathbb{N}$, $m\ge 1$. The latter estimate directly gives the desired one. 
\end{proof}

\begin{proposition}[Exponential stability versus exponential blow-up in the almost-sure sense]
\label{Theo:2} 
Let $\{Z_n\}$ be the solution to the Milstein scheme \eqref{FinalLable} with a step size $0<\Delta t<1$. Then, there exist $\Delta t_\ddagger>0$ and $0<C_-<C_+$ such that  
\begin{align}
\limsup_{n\to \infty} \frac{1}{t_n} \log |Z_n| 
 & \, \stackrel{\text{a.s.}} {\ge} \, \frac{1}{\Delta t} \log \left( 1 + \left( \lambda +\frac{\varepsilon^2}{2} - \frac{\sigma^2}{2}  \right)  \Delta t \right) - C_- \Delta t^{1/2},  
  \label{Limit:2a} \\
  \limsup_{n\to \infty} \frac{1}{t_n} \log |Z_n| 
 &\, \stackrel{\text{a.s.}} {\le} \, \frac{1}{\Delta t} \log \left( 1 + \left( \lambda +\frac{\varepsilon^2}{2} - \frac{\sigma^2}{2}  \right)  \Delta t \right) + C_+  \Delta t^{1/2},  
  \label{Limit:2b}
\end{align}  
for all $0<\Delta t<\Delta t_\ddagger$. 
\end{proposition}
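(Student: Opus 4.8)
The plan is to reduce the almost-sure Lyapunov exponent to a single deterministic number via the strong law of large numbers, and then to sandwich that number between $\tfrac{1}{\Delta t}\log\gamma_{\Delta t}\pm C\,\Delta t^{1/2}$ using the two one-sided estimates of Lemma~\ref{Lem:BaIneqn} together with Lemma~\ref{Lem:BaExp}.

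First I would iterate \eqref{FinalLable} to write
\[
\frac{1}{t_n}\log|Z_n| \;=\; \frac{\log|Z_0|}{n\Delta t} \;+\; \frac{1}{n\Delta t}\sum_{k=0}^{n-1}\log\bigl(\gamma_{\Delta t}+W_k\bigr),\qquad W_k:=\sigma\,\Delta B_{1,k}+\tfrac{\sigma^2}{2}\,\Delta B_{1,k}^2 ,
\]
where the $W_k$ are i.i.d., the increments $\Delta B_{1,k}$ being i.i.d. $N(0,\Delta t)$, and $\gamma_{\Delta t}$ is as in \eqref{gammaDelta}. The elementary but decisive observation is the completion of the square $W_k=\tfrac12(\sigma\,\Delta B_{1,k}+1)^2-\tfrac12\ge-\tfrac12$, so that for every $\Delta t$ small enough that $\gamma_{\Delta t}>\tfrac12$ one has $\gamma_{\Delta t}+W_k\ge\gamma_{\Delta t}-\tfrac12>0$ surely; this is precisely what keeps $x\mapsto\log(\gamma_{\Delta t}+x)$ away from its singularity at $x=-\gamma_{\Delta t}$. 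Combined with concavity ($\log(\gamma_{\Delta t}+W_0)\le\log\gamma_{\Delta t}+W_0/\gamma_{\Delta t}$, and $\mathbb{E}[W_0]=\tfrac{\sigma^2}{2}\Delta t<\infty$) this shows $\log(\gamma_{\Delta t}+W_0)\in L^1$, so Kolmogorov's strong law gives, almost surely,
\[
\limsup_{n\to\infty}\frac{1}{t_n}\log|Z_n| \;=\; \lim_{n\to\infty}\frac{1}{t_n}\log|Z_n| \;=\; \frac{1}{\Delta t}\,\mathbb{E}\bigl[\log(\gamma_{\Delta t}+W_0)\bigr],
\]
so it remains to estimate this deterministic quantity from above and below.

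All moments of $W_0$ are finite (it is a polynomial in a Gaussian), and from $\mathbb{E}[\Delta B_{1,k}^{2i}]=(2i-1)!!\,\Delta t^i$ one gets $\mathbb{E}[W_0]=\tfrac{\sigma^2}{2}\Delta t$, $\mathbb{E}[W_0^2]=\sigma^2\Delta t+\tfrac{3\sigma^4}{4}\Delta t^2$, $\mathbb{E}[W_0^3]=\tfrac{9\sigma^4}{2}\Delta t^2+\tfrac{15\sigma^6}{8}\Delta t^3$. For the upper bound \eqref{Limit:2b} I apply \eqref{BaIneqn1} with $\gamma=\gamma_{\Delta t}$, $x=W_0$ (licit since $W_0>-\gamma_{\Delta t}$ surely), take expectations, and substitute these moments; the two leading terms nearly cancel, $\tfrac{\mathbb{E}[W_0]}{\gamma_{\Delta t}}-\tfrac{\mathbb{E}[W_0^2]}{2\gamma_{\Delta t}^2}=\tfrac{\sigma^2\Delta t}{2}\cdot\tfrac{\gamma_{\Delta t}-1}{\gamma_{\Delta t}^2}-\tfrac{3\sigma^4\Delta t^2}{8\gamma_{\Delta t}^2}$, which is $O(\Delta t^2)$ because $\gamma_{\Delta t}-1=O(\Delta t)$, while the cubic term is $O(\Delta t^2)$ as well; hence $\mathbb{E}[\log(\gamma_{\Delta t}+W_0)]-\log\gamma_{\Delta t}\le C\,\Delta t^2$, and dividing by $\Delta t$ leaves an error $O(\Delta t)\le C_+\Delta t^{1/2}$. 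For the lower bound \eqref{Limit:2a} I apply \eqref{BaIneqn2} instead (licit because $W_0\ge-\tfrac12>-\tfrac23\gamma_{\Delta t}$ once $\gamma_{\Delta t}>\tfrac34$), take expectations, use the same cancellation of the first two moment terms, and bound the remainder by Lemma~\ref{Lem:BaExp}: $\mathbb{E}[\xi_{\gamma_{\Delta t}}(W_0)]\ge-C\bigl(|\sigma|^3\Delta t^{3/2}\gamma_{\Delta t}^{-3}+\sigma^4\Delta t^2\gamma_{\Delta t}^{-4}\bigr)$. Thus $\mathbb{E}[\log(\gamma_{\Delta t}+W_0)]-\log\gamma_{\Delta t}\ge-C'\Delta t^{3/2}$, and dividing by $\Delta t$ gives the claimed $-C_-\Delta t^{1/2}$. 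Enlarging $C_+$ if necessary so that $C_-<C_+$, and collecting all smallness thresholds into one $\Delta t_\ddagger$, finishes the argument.

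The main obstacle is conceptual rather than computational. First, one must realise that the logarithm has to be kept off its singularity (via $W_k\ge-\tfrac12$) precisely so that the i.i.d. sequence $\{\log(\gamma_{\Delta t}+W_k)\}$ is integrable and the strong law applies, converting the pathwise $\limsup$ into the deterministic mean $\tfrac1{\Delta t}\mathbb{E}[\log(\gamma_{\Delta t}+W_0)]$. Second, one must notice that the correct exponential rate $\tfrac1{\Delta t}\log\gamma_{\Delta t}$ emerges only after the two $O(\Delta t)$ contributions $\tfrac{\sigma^2\Delta t}{2\gamma_{\Delta t}}$ and $\tfrac{\sigma^2\Delta t}{2\gamma_{\Delta t}^2}$ cancel — a bound using concavity alone would be off by $\sigma^2$ — so the third-order Taylor information of Lemma~\ref{Lem:BaIneqn} is genuinely needed. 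Once this is in place the $\Delta t^{1/2}$ rate is forced: it is the footprint of the cubic-in-$\sqrt{\Delta t}$ term $|\sigma\sqrt{\Delta t}|^3$ produced by the correction $\xi_{\gamma_{\Delta t}}$ in Lemma~\ref{Lem:BaExp}, and this is the only place where a genuinely $O(\Delta t^{1/2})$ error (rather than $O(\Delta t)$) enters.
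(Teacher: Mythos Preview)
Your proposal is correct and follows essentially the same approach as the paper: both use the completion of the square $W_k\ge -\tfrac12$ to stay away from the singularity, invoke Kolmogorov's strong law of large numbers, and then apply the two one-sided inequalities of Lemma~\ref{Lem:BaIneqn} together with Lemma~\ref{Lem:BaExp} to obtain the $O(\Delta t^{1/2})$ bounds. The only cosmetic difference is that you apply the SLLN once to the full summand $\log(\gamma_{\Delta t}+W_k)$ and then estimate the resulting expectation, whereas the paper first applies the pointwise inequality and then the SLLN term by term; since the inequality is pointwise and expectation is linear, these are equivalent.
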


\begin{proof} With the notation $\gamma_{\Delta t}$ defined at \eqref{gammaDelta}, we follow from the solution formula \eqref{FinalLable} that 
\begin{align*}
    |Z_n| = \sqrt{x_0^2+y_0^2} \; \prod_{k=0}^{n-1} \left( \gamma_{\Delta t}   +   \sigma   \Delta B_{1,k} +\frac{\sigma^2}{2}  \Delta B_{1,k}^2  \right) .     
\end{align*}    
Subsequently, after taking the absolute value of $Z_n$ and the logarithm of $|Z_n|$,   we get
\begin{align}
 \log |Z_n| &= \log \sqrt{x_0^2+y_0^2} +   \sum_{k=0}^{n-1} \log  
 \left |  \gamma_{\Delta t}  +   \sigma   \Delta B_{1,k} +\frac{\sigma^2}{2}  \Delta B_{1,k}^2  \right |. 
 \label{LogForm}
\end{align}

In the following, we split this proof into two parts. The first one is to prove the upper boundedness \eqref{Limit:2a}, while the second one is to prove the lower boundedness \eqref{Limit:2b}. 

\medskip

\noindent \underline{\it Lower boundedness:} We first note that it is possible to choose $\Delta t_0$ small enough so that $\gamma_{\Delta t} > 3/4$ for all $0<\Delta t <\Delta t_0$. Therefore, by observing  
\begin{align}
     \sigma   \Delta B_{1,k} +\frac{\sigma^2}{2}  \Delta B_{1,k}^2 &  =    \frac{( \sigma   \Delta B_{1,k} +1)^2}{2}  -  \frac{1}{2} , 
    \label{MIA1}
\end{align}
we see in this interval of $\Delta t$ that 
\begin{align}
      \sigma   \Delta B_{1,k} +\frac{\sigma^2}{2}  \Delta B_{1,k}^2 > - \frac{1}{2} > - \frac{2}{3} \gamma_{\Delta t} \quad \text{for all } \omega \in \Omega .  
      \label{MIA2}
\end{align}
To prove the lower boundedness \eqref{Limit:2a}, we will employ the inequality \eqref{BaIneqn2}.
Indeed, thanks to \eqref{MIA1}, we have $\gamma_{\Delta t}  +   \sigma   \Delta B_{1,k} + (\sigma^2/2) \Delta B_{1,k}^2 >0 $ for $0<\Delta t < \Delta t_0$. 
Therefore, we imply from the inequalities \eqref{BaIneqn2} and  \eqref{MIA2} that 
\begin{align*}
    & \log  
 \left |  \gamma_{\Delta t}  +   \sigma   \Delta B_{1,k} +\frac{\sigma^2}{2}  \Delta B_{1,k}^2  \right |  = \log  
 \left [  \gamma_{\Delta t}  +  \left( \sigma   \Delta B_{1,k} +\frac{\sigma^2}{2}  \Delta B_{1,k}^2 \right)  \right ] \\
 & \ge \log \gamma_{\Delta t} + \frac{1}{\gamma_{\Delta t}} \left( \sigma   \Delta B_{1,k} +\frac{\sigma^2}{2}  \Delta B_{1,k}^2 \right) -  \frac{1}{2\gamma_{\Delta t}^2} \left( \sigma   \Delta B_{1,k} +\frac{\sigma^2}{2}  \Delta B_{1,k}^2 \right)^2 \\
 & + \xi_{\gamma_{\Delta t}}\left( \sigma   \Delta B_{1,k} +\frac{\sigma^2}{2}  \Delta B_{1,k}^2 \right) , 
\end{align*}
where the last term is defined by Lemma \ref{Lem:BaExp}. 
By the Kolmogorov strong law of large numbers, for all $k=0,1,\dots,n-1$, we have 
\begin{align}
      \begin{aligned}
      \limsup_{n\to \infty} \frac{1}{t_n}  \sum_{k=0}^{n-1} \frac{  \sigma   \Delta B_{1,k} +\frac{\sigma^2}{2}  \Delta B_{1,k}^2  }{ \gamma_{\Delta t} }  \stackrel{\text{a.s.}}{=}  \frac{1}{\Delta t}\mathbb{E} 
      \left( \frac{  \sigma   \Delta B_{1,k} +\frac{\sigma^2}{2}  \Delta B_{1,k}^2  }{\gamma_{\Delta t}} \right)
      =    \frac{\sigma^2}{2\gamma_{\Delta t}}  , 
      \end{aligned}
      \label{Stronglaw:Ap1}  
\end{align}
and
\begin{align}
   \begin{aligned}
        \limsup_{n\to \infty} \frac{1}{t_n} \sum_{k=0}^{n-1} \left( \frac{ \sigma   \Delta B_{1,k} +\frac{\sigma^2}{2}  \Delta B_{1,k}^2 }{\gamma_{\Delta t}} \right )^2  \stackrel{\text{a.s.}}{=} \frac{1}{\Delta t}\mathbb{E}\left( \frac{ \sigma   \Delta B_{1,k} +\frac{\sigma^2}{2}  \Delta B_{1,k}^2 }{\gamma_{\Delta t}} \right )^2  = \frac{4\sigma^2 +  3\sigma^4 \Delta t }{4\gamma_{\Delta t}^2} .
   \end{aligned} \label{Stronglaw:Ap2}
\end{align} 
While, Lemma \ref{Lem:BaExp} gives 
\begin{align*}
        \mathbb{E} \left[ \xi_{\gamma_{\Delta t}}\left( \sigma   \Delta B_{1,k} +\frac{\sigma^2}{2}  \Delta B_{1,k}^2 \right) \right] \ge -  C \left( \frac{|\sigma \sqrt{\Delta t}|^3}{\gamma_{\Delta t}^3} +  \frac{|\sigma \sqrt{\Delta t}|^4}{\gamma_{\Delta t}^4} \right) ,  
    \end{align*}
which allows us to apply the Kolmogorov strong law of large numbers to get that 
\begin{align*}
    & \limsup_{n\to \infty} \frac{1}{t_n} \sum_{k=0}^{n-1} \xi_{\gamma_{\Delta t}}\left( \sigma   \Delta B_{1,k} +\frac{\sigma^2}{2}  \Delta B_{1,k}^2 \right) \\
    &  = \frac{1}{\Delta t} \mathbb{E} \left[ \xi_{\gamma_{\Delta t}}\left( \sigma   \Delta B_{1,k} +\frac{\sigma^2}{2}  \Delta B_{1,k}^2 \right) \right] \ge -  C \left( \frac{|\sigma|^3 \sqrt{\Delta t}}{\gamma_{\Delta t}^3} +  \frac{\sigma^4 \Delta t}{\gamma_{\Delta t}^4} \right) .
\end{align*}
Taking the above arguments, it follows from \eqref{LogForm} that 
\begin{align*}
\limsup_{n\to \infty} \frac{1}{t_n} \log |Z_n| 
 & =    \limsup_{n\to \infty} \frac{1}{t_n}    \sum_{k=0}^{n-1} \log  
 \left |  \gamma_{\Delta t}  +   \sigma   \Delta B_{1,k} +\frac{\sigma^2}{2}  \Delta B_{1,k}^2  \right | \\
 & \ge \frac{1}{\Delta t} \log \gamma_{\Delta t} + \frac{\sigma^2}{2\gamma_{\Delta t}^2} - \frac{4\sigma^2 +  3\sigma^4 \Delta t }{8\gamma_{\Delta t}^4} -  C \left( \frac{|\sigma|^3 \sqrt{\Delta t}}{\gamma_{\Delta t}^3} +  \frac{\sigma^4 \Delta t}{\gamma_{\Delta t}^4} \right) \\
 & = \frac{1}{\Delta t} \log \left( 1 + \left( \lambda +\frac{\varepsilon^2}{2} - \frac{\sigma^2}{2}  \right)  \Delta t \right) + O(\sqrt{\Delta t}), 
\end{align*}
where we notice that
\begin{align*}
    \frac{\sigma^2}{2\gamma_{\Delta t}^2} - \frac{4\sigma^2 }{8\gamma_{\Delta t}^4} = \frac{\sigma^2(\gamma_{\Delta t}^2-1)}{2\gamma_{\Delta t}^4} = O (\Delta t). 
\end{align*}

\noindent \underline{\it Upper boundedness:} To prove the upper boundedness \eqref{Limit:2b}, for each $\gamma>0$, we use the inequality \eqref{BaIneqn1}. In fact,
by restricting the step size $\Delta t$ to the interval $(0,\Delta t_0)$, this inequality \eqref{BaIneqn1}  shows   
\begin{align*}
    & \log  
 \left |  \gamma_{\Delta t}  +   \sigma   \Delta B_{1,k} +\frac{\sigma^2}{2}  \Delta B_{1,k}^2  \right |  = \log  
 \left [  \gamma_{\Delta t}  +  \left( \sigma   \Delta B_{1,k} +\frac{\sigma^2}{2}  \Delta B_{1,k}^2 \right)  \right ] \\
 & \le \log \gamma_{\Delta t} + \frac{1}{\gamma_{\Delta t}} \left( \sigma   \Delta B_{1,k} +\frac{\sigma^2}{2}  \Delta B_{1,k}^2 \right) -  \frac{1}{2\gamma_{\Delta t}^2} \left( \sigma   \Delta B_{1,k} +\frac{\sigma^2}{2}  \Delta B_{1,k}^2 \right)^2 \\
 & + \frac{1}{3\gamma_{\Delta t}^3} \left( \sigma   \Delta B_{1,k} +\frac{\sigma^2}{2}  \Delta B_{1,k}^2 \right)^3 , 
\end{align*}
where the limit superior related to the last term can be calculated via the Kolmogorov strong law of large numbers, similarly to \eqref{Stronglaw:Ap1}-\eqref{Stronglaw:Ap2}. The other computations to obtain \eqref{Limit:2b} is performed in the same way as the proof of the lower bound. 
\end{proof}

% \begin{remark}
% \label{Rem:UseofTE}
% Let us highlight the main difference between our techniques and the existing results. If we write the logarithmic factor in \eqref{LogForm} as 
% \begin{align}
%      \begin{aligned}
%          & \log  
%  \left |  \gamma_{\Delta t}  +   \sigma   \Delta B_{1,k} +\frac{\sigma^2}{2}  \Delta B_{1,k}^2  \right | \\
%  & =  \log  
%  \left | 1 + \left[ \left( \lambda + \frac{\varepsilon^2}{2} - \frac{\sigma^2}{2}  \right)  \Delta t  +   \sigma   \Delta B_{1,k} +\frac{\sigma^2}{2}  \Delta B_{1,k}^2 \right]  \right |
%      \end{aligned}
%      \label{Rem:UseofTE:S}
% \end{align} 
% in which the notation $\gamma_{\Delta t}$ was given at \eqref{gammaDelta}, and then use the Taylor expansion of the function $x \mapsto \log |1+x|$ with 
% \begin{align}
%     x = x_* = \left( \lambda + \frac{\varepsilon^2}{2} - \frac{\sigma^2}{2}  \right)  \Delta t  +   \sigma   \Delta B_{1,k} +\frac{\sigma^2}{2}  \Delta B_{1,k}^2, 
%     \label{xStar}
% \end{align}
% then, in the infinite series with respect to $j$, the term $( \lambda + \varepsilon^2/2 -  \sigma^2/2 )  \Delta t$ will appear everywhere in $x^j$ for all $j\ge 1$. In other words, this term appears infinite times. Since this term will contribute to the value of the discrete Lyapunov exponent, it will be much more difficult. In our techniques, this can be avoided by considering the Taylor expansion of the function $x\mapsto \log|\gamma_{\Delta t} + x|$ with $x = \sigma   \Delta B_{1,k} + (\sigma^2/2)  \Delta B_{1,k}^2$.
% \end{remark} 

\begin{remark}
\label{Rem:Fundamental}
It is well-known that utilising the  fundamental inequality
$$\log(1+x)\le x - \frac{x^2}{2} + \frac{x^3}{3}, \quad \text{for } x>-1,$$
has been permanently used to estimate the discrete Lyapunov exponent (in the almost-sure sense), see e.g. \cite[Theorem 4.2]{higham2007almost} for more detail. If we use this inequality to estimate the logarithmic factor in \eqref{LogForm}, then, by writing the logarithmic factor in \eqref{LogForm} as 
\begin{align}
     \begin{aligned}
     \log  
 \left |  \gamma_{\Delta t}  +   \sigma   \Delta B_{1,k} +\frac{\sigma^2}{2}  \Delta B_{1,k}^2  \right |   =  \log  
 \left | 1 +x_*  \right | ,
     \end{aligned}
     \label{Rem:UseofTE:S}
\end{align} 
it holds   
\begin{align*}
\log  
 \left |  \gamma_{\Delta t}  +   \sigma   \Delta B_{1,k} +\frac{\sigma^2}{2}  \Delta B_{1,k}^2  \right | \le x_* - \frac{1}{2} x_*^2  + \frac{1}{3} x_*^3  ,
\end{align*}
where 
\begin{align}
    x_* = \left( \lambda + \frac{\varepsilon^2}{2} - \frac{\sigma^2}{2}  \right)  \Delta t  +   \sigma   \Delta B_{1,k} +\frac{\sigma^2}{2}  \Delta B_{1,k}^2.
%    \label{xStar}
\end{align}
Here, $\Delta t$ has been chosen such that $\gamma_{\Delta t}  +   \sigma   \Delta B_{1,k} + (\sigma^2/2)  \Delta B_{1,k}^2$ is almost-surely positive.  
\end{remark}

\begin{remark} It had been showed in \cite{berkolaiko2012almost} that the discrete It\^o formula gave an applicable approach to obtaining a picture of the equivalence between almost-sure stability and almost-sure blow-up \eqref{Pic:1}-\eqref{Pic:2}. However, these limits have not provided a mathematical tool to estimate the discrete Lyapunov exponent, or in other words, to obtain the equivalence between almost-surely \textit{exponential}  stability and almost-surely \textit{exponential} blow-up.      
\end{remark}

\section{Linear SDEs and theta-Milstein scheme}

We will consider a case study of our method for the Milstein scheme of a linear SDE, as well as the direct extension to the $\theta$-Milstein scheme, where, according to our knowledge, sharp estimates of the discrete Lyapunov exponents in the sense \eqref{SharpSense} have not been established yet. 
 
\subsection{Milstein scheme of linear SDE}

As a particular case, we obtain a result of the exponential stability versus the exponential blow-up at infinite time for the Milstein scheme of linear stochastic differential equations. For $\lambda,\sigma \in \mathbb{R}$, we consider the equation \begin{equation}
    dX(t) = \lambda X(t)dt + \sigma X(t)dB(t)  \quad \text{for } t>0, \qquad 
    X(0) = x_0 \ne 0, 
\label{SDE:Continuous0}
\end{equation} 
where $B$ is the standard scalar Brownian motion. The exponential stability of the zero equilibrium of \eqref{SDE:Continuous0} can be easily found in the literature, see e.g. in \cite{higham2007almost}, where by the It\^o formula we have 
\begin{gather*}
      \limsup_{t\to\infty} \dfrac{1}{t} \log \big[ \mathbb{E}|X(t)|^2\big]^{1/2} = \lambda + \frac{\sigma^2}{2} , 
\end{gather*} 
and 
\begin{gather*}
   \limsup_{t\to\infty} \dfrac{1}{t} \log |X(t)| = \lambda - \frac{\sigma^2}{2} .
\end{gather*} 
Consequently, the solution $X$ is mean-square exponentially stable with the exponent $\lambda+ \sigma^2/2$ or almost-surely exponentially stable with the exponent $\lambda- \sigma^2/2$ if 
 $$(\lambda,\sigma)\in \mathbb S^{\mathsf{ms}}:= \left\{(x,y) \bigg|\,x + \frac{y^2}{2}<0 \right\},$$ or respectively,  $$(\lambda,\sigma)\in \mathbb S^{\mathsf{as}}:= \left\{(x,y)\bigg|\,x- \frac{y^2}{2}<0 \right\}.$$
 In contrast, the solution is mean-square/almost-surely exponentially blowing up with the same exponents if $(\lambda,\sigma)\in \mathbb B^{\mathsf{ms}} := \mathbb{R}^2 \setminus \overline {\mathbb S^{\mathsf{ms}} } $, or respectively, $(\lambda,\sigma)\in \mathbb B^{\mathsf{as}} := \mathbb{R}^2 \setminus \overline {\mathbb S^{\mathsf{as}} } $.  In \cite[Section 3]{higham2007almost}, 
 the preservation of the above behaviours was proved to be retained for the Milstein scheme \eqref{SDE:Continuous0} with a small enough step size $\Delta t$.
 Specifically, for $(\lambda,\sigma)\in \mathbb S^{\mathsf{ms}}$ or $(\lambda,\sigma)\in\mathbb S^{\mathsf{as}}$ and for any $\kappa>0$, the Milstein scheme's solution $\{X_n\}$ to   \eqref{SDE:Continuous0} satisfies the upper bounds 
\begin{align}
      \limsup_{n\to\infty} \dfrac{1}{t_n} \log \big[ \mathbb{E}|X_n|^2\big]^{1/2} \le (1-\kappa) \left( \lambda + \frac{\sigma^2}{2} \right), 
\label{Intro1}
\end{align}  
and
\begin{align}
     \limsup_{n\to\infty} \dfrac{1}{t_n} \log |X_n| \stackrel{\text{a.s.}}{\le} (1-\kappa) \left( \lambda - \frac{\sigma^2}{2}  \right) .
\label{Intro2}
\end{align}  
However, the exponential blow-up, or more clearly, estimating the discrete Lyapunov exponent from below, has not yet been studied. 

\medskip
 
By letting $\varepsilon=0$, System \eqref{SDE:System} becomes an uncoupling one, where its equations exactly coincide with the equation \eqref{SDE:Continuous0}, and so does the calculation \eqref{For:LogXY}. Therefore, as a direct consequence of Theorem \ref{Theo:Main}, we obtain the following result.

\begin{proposition}
\label{Theo:System}
Let $\{X_n\}$ be the solution to the Milstein scheme \eqref{SDE:Continuous0} with a step size $0<\Delta t<1$. Then, there exist $0<\Delta t_2<1$ and $C_3,C_4>0$ such that the  estimates    
\begin{gather*}
    \lambda + \dfrac{\sigma^2}{2} - C_3\Delta t  \le \limsup_{n\to\infty} \dfrac{1}{t_n} \log \big[ \mathbb{E}|X_n|^2\big]^{1/2}  \le  \lambda + \dfrac{\sigma^2}{2} + C_3\Delta t, 
   % \label{Est:Main1}
    \\ 
 \lambda - \dfrac{\sigma^2}{2} - C_4\Delta t^{1/2}   \stackrel{\text{a.s.}}{\le}  \limsup_{n\to\infty} \dfrac{1}{t_n} \log |X_n| \stackrel{\text{a.s.}}{\le}   \lambda - \dfrac{\sigma^2}{2} + C_4\Delta t^{1/2} 
     %\label{Est:Main2}
\end{gather*}
hold for all $0<\Delta t< \Delta t_2$. Consequently, in this interval of step size, 
\begin{itemize}
    \item[a)] if 
 $(\lambda,\sigma)\in \mathbb S^{\mathsf{ms}}$ or $(\lambda,\sigma)\in \mathbb B^{\mathsf{ms}}$ then $X_n$ is  mean-square exponentially stable, or respectively,  mean-square exponentially blowing up with the exponent $\lambda+\frac{\sigma^2}{2}$; and 
    \item[b)] if 
 $(\lambda,\sigma)\in \mathbb S^{\mathsf{as}}$ or $(\lambda,\sigma)\in \mathbb B^{\mathsf{as}}$ then $X_n$ is almost-sure exponentially stable, or respectively,  almost-sure exponentially blowing up with the exponent $\lambda-\frac{\sigma^2}{2}$.
\end{itemize}
\end{proposition}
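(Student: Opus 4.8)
The plan is to read Proposition \ref{Theo:System} off Theorem \ref{Theo:Main} by specialising to $\varepsilon=0$. The first step is the observation that the Milstein scheme for the scalar equation \eqref{SDE:Continuous0} is the multiplicative recursion $X_n = X_{n-1}\bigl[\,1+(\lambda-\tfrac{\sigma^2}{2})\Delta t+\sigma\,\Delta B_{n-1}+\tfrac{\sigma^2}{2}\Delta B_{n-1}^2\,\bigr]$ with $X_0=x_0$ ($\ne 0$), which is precisely \eqref{FinalLable} once $\varepsilon$ is set to $0$, $B$ is read in the role of $B_1$, and $|Z_n|$, $\sqrt{x_0^2+y_0^2}$ are replaced by $|X_n|$, $|x_0|$. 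Equivalently, putting $\varepsilon=0$ decouples \eqref{SDE:System} into two independent copies of \eqref{SDE:Continuous0}, so the vector Milstein scheme of \eqref{SDE:System} started from $(x_0,0)\in\mathbb{R}^2\setminus\{(0,0)\}$ has $Y_n\equiv 0$, whence $|(X_n,Y_n)|=|X_n|$; thus $|X_n|$ is exactly the quantity $|Z_n|$ of \eqref{FinalLable} at $\varepsilon=0$. As already recorded in the proof of Proposition \ref{Theo:2} through the identity \eqref{MIA1}, for all sufficiently small $\Delta t$ every factor $\gamma_{\Delta t}+\sigma\,\Delta B_{1,k}+\tfrac{\sigma^2}{2}\Delta B_{1,k}^2$ in this product is almost surely positive, so no absolute-value subtlety arises and the reduction is legitimate.

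Granting the reduction, the second step is simply to invoke the estimates \eqref{Theo:Est:Main1}--\eqref{Theo:Est:Main2} of Theorem \ref{Theo:Main} (themselves assembled from Propositions \ref{Theo:1} and \ref{Theo:2}) with $\varepsilon=0$. Set $\Delta t_2:=\Delta t_1$ and let $C_3:=C_1$, $C_4:=C_2$ denote the constants furnished there, now evaluated at $\varepsilon=0$ (so that, e.g., $\gamma_{\Delta t}$ of \eqref{gammaDelta} and $\mu$ of \eqref{mu} are taken with $\varepsilon=0$). Since $\lambda+\tfrac{\varepsilon^2}{2}\pm\tfrac{\sigma^2}{2}$ collapses to $\lambda\pm\tfrac{\sigma^2}{2}$, the two inequality chains of \eqref{Theo:Est:Main1}--\eqref{Theo:Est:Main2} become verbatim the two displays of Proposition \ref{Theo:System}, valid for all $0<\Delta t<\Delta t_2$.

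For the consequences a) and b), the last step is to note the evident correspondence between the two- and three-dimensional parameter regions: for the given $(\lambda,\sigma)$ one has $(\lambda,\sigma)\in\mathbb{S}^{\mathsf{ms}}$ iff $(\lambda,0,\sigma)\in\mathbf{S}^{\mathsf{ms}}$, and likewise $\mathbb{B}^{\mathsf{ms}}\leftrightarrow\mathbf{B}^{\mathsf{ms}}$, $\mathbb{S}^{\mathsf{as}}\leftrightarrow\mathbf{S}^{\mathsf{as}}$, $\mathbb{B}^{\mathsf{as}}\leftrightarrow\mathbf{B}^{\mathsf{as}}$ under $(\lambda,\sigma)\mapsto(\lambda,0,\sigma)$. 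Hence conclusions a)--b) of Theorem \ref{Theo:Main} transcribe directly: after shrinking $\Delta t_2$ if necessary (exactly as in the derivation of the consequences in Theorem \ref{Theo:Main}), so that $C_3\Delta t<|\lambda+\tfrac{\sigma^2}{2}|$ and $C_4\Delta t^{1/2}<|\lambda-\tfrac{\sigma^2}{2}|$ for the fixed $(\lambda,\sigma)$ under consideration, the bounds pin the sign of the discrete Lyapunov exponent and yield mean-square (resp. almost-sure) exponential stability with exponent $\lambda+\tfrac{\sigma^2}{2}$ (resp. $\lambda-\tfrac{\sigma^2}{2}$) when $(\lambda,\sigma)\in\mathbb{S}^{\mathsf{ms}}$ (resp. $\mathbb{S}^{\mathsf{as}}$), and the corresponding exponential blow-up when $(\lambda,\sigma)\in\mathbb{B}^{\mathsf{ms}}$ (resp. $\mathbb{B}^{\mathsf{as}}$). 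Since the argument is pure transcription from Theorem \ref{Theo:Main}, I expect no genuine obstacle; the only point requiring a line of care is the almost-sure positivity of the scalar Milstein factors recalled in the first paragraph, which is exactly what permits Theorem \ref{Theo:Main} to be applied to $|X_n|$ without modification.
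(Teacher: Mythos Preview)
Your proposal is correct and follows exactly the paper's own approach: the paper derives Proposition~\ref{Theo:System} as an immediate consequence of Theorem~\ref{Theo:Main} by setting $\varepsilon=0$, noting that \eqref{SDE:System} then decouples into copies of \eqref{SDE:Continuous0}. Your additional remarks on the positivity of the Milstein factors and the correspondence $(\lambda,\sigma)\mapsto(\lambda,0,\sigma)$ between the parameter regions only make explicit what the paper leaves implicit.
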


\subsection{The $\theta$ -Milstein scheme} 

Using the proposed method, this part is to obtain the preservation of the exponential stability and exponential blow-up behaviours for the theta-Milstein scheme of \eqref{SDE:Continuous0}, where the explicit term $ \lambda X_{n-1} \Delta t$ is replaced by  $[\lambda \theta X_{n} +\lambda (1-\theta)X_{n-1}]\Delta t$. Precisely, the theta-Milstein scheme is given, for any $0\le \theta \le 1$, by 
\begin{align}
\left\{\begin{aligned}
    &X^{\theta}_{n} = X^{\theta}_{n-1}+ [\lambda \theta X^{\theta}_{n} +\lambda (1-\theta)X_{n-1}]\Delta t +\sigma X^{\theta}_{n-1} \Delta B_{n-1} +\frac{\sigma^2}{2} X^{\theta}_{n-1} (\Delta B_{n-1}^2 -\Delta t), \\ &X^{\theta}_0 =x_0,
\end{aligned} \right.
\label{Theta:Milstein}
\end{align}
for $n\in \mathbb{N}$, $n\ge 1$, see e.g. \cite{Kloeden1992numerical}.  
By factorising the common factor $X^{\theta}_{n-1}$ on the right-hand side of \eqref{Theta:Milstein}, the induction shows that  
\begin{align}
    \begin{aligned}
        X^{\theta}_n  &= X^{\theta}_{n-1}  \left [  \frac{1+\lambda (1-\theta)\Delta t}{1-\lambda \theta \Delta t} + \frac{\sigma \Delta B_{n-1}}{1-\lambda \theta \Delta t} +\frac{\sigma^2}{2} \frac{\Delta B_{n-1}^2-\Delta t}{1-\lambda \theta \Delta t} \right]  \\
    %&= x_0 \prod_{k=0}^{n-1} \left [ \frac{1+\lambda (1-\theta)\Delta t}{1-\lambda \theta \Delta t} + \frac{\sigma \Delta B_{k}}{1-\lambda \theta \Delta t} +\frac{\sigma^2}{2} \frac{\Delta B_{k}^2-\Delta t}{1-\lambda \theta \Delta t} \right]  \\
     &= x_0 \prod_{k=0}^{n-1} \left [ \eta_{\Delta t} + \frac{\sigma \Delta B_{k}+\frac{\sigma^2}{2}  \Delta B_{k}^2}{1-\lambda \theta \Delta t} \right] ,
    \end{aligned}
    \label{Theta:SolForm}
\end{align}
where we denote $$\eta_{\Delta t}:= \frac{1 + (\lambda(1-\theta)- \sigma^2/2) \Delta t}{1-\lambda \theta \Delta t}  .$$ Let us obtain likewise expressions as \eqref{Limit:1}, \eqref{Limit:2a} and \eqref{Limit:2b}. 
First, regarding the mean-square sense, it follows from \eqref{Theta:SolForm} that 
\begin{align*}
 (X^{\theta}_n)^2 = x_0^2 \prod_{k=0}^{n-1} \left [ \eta_{\Delta t}^2 + 2 \eta_{\Delta t} \dfrac{\sigma \Delta B_{k}+\frac{\sigma^2}{2}  \Delta B_{k}^2}{1-\lambda \theta \Delta t} + \left( \dfrac{\sigma \Delta B_{k}+\frac{\sigma^2}{2}  \Delta B_{k}^2}{1-\lambda \theta \Delta t} \right)^2 \right] . 
\end{align*}
 Thus, recalling the basic expectations in the proof of Proposition \ref{Theo:1}, we get 
\begin{align*}
\mathbb E |X^{\theta}_n|^2  
%&= x_0^2 \prod_{k=0}^{n-1} \left [ \eta_{\Delta t}^2 + 2 \eta_{\Delta t} \dfrac{\sigma \mathbb E (\Delta B_{k} )+\frac{\sigma^2}{2} \mathbb E ( \Delta B_{k}^2) }{1-\lambda \theta \Delta t} + \mathbb E \left( \dfrac{\sigma \Delta B_{k}+\frac{\sigma^2}{2}  \Delta B_{k}^2}{1-\lambda \theta \Delta t} \right)^2 \right] \\
&= x_0^2 \prod_{k=0}^{n-1} \left [ \eta_{\Delta t}^2 +  \dfrac{\sigma^2 \eta_{\Delta t}  \Delta t}{1-\lambda \theta \Delta t} +  \dfrac{\sigma^2\Delta t + \frac{3\sigma^2}{4} \Delta t^2}{(1-\lambda \theta \Delta t)^2}  \right] \\
&= x_0^2 \left [ \eta_{\Delta t}^2 +  \dfrac{\sigma^2 \eta_{\Delta t}  \Delta t}{1-\lambda \theta \Delta t} +  \dfrac{\sigma^2\Delta t + \frac{3\sigma^2}{4} \Delta t^2}{(1-\lambda \theta \Delta t)^2}  \right]^n .
\end{align*}
By taking the limit superior as $n\to \infty$ and using the Taylor expansion of the function $x\mapsto \log (\eta_{\Delta t}^2 + x)$, we get
\begin{align}
\begin{aligned}
        \limsup_{n\to \infty} \frac{1}{t_n} \log \mathbb E |X^{\theta}_n|^2 &= \frac{1}{\Delta t} \log \left( \eta_{\Delta t}^2 +  \dfrac{\sigma^2 \eta_{\Delta t}  \Delta t}{1-\lambda \theta \Delta t} +  \dfrac{\sigma^2\Delta t + \frac{3\sigma^2}{4} \Delta t^2}{(1-\lambda \theta \Delta t)^2} \right) \\
    &=  \frac{2}{\Delta t} \log \eta_{\Delta t} + \frac{1}{\Delta t}  \sum_{m=1}^\infty \frac{(-1)^{m-1}}{m} \left( \dfrac{\sigma^2 \eta_{\Delta t}  \Delta t}{\eta_{\Delta t}^2(1-\lambda \theta \Delta t)} +  \dfrac{\sigma^2\Delta t + \frac{3\sigma^2}{4} \Delta t^2}{\eta_{\Delta t}^2 (1-\lambda \theta \Delta t)^2} \right)^m.
\end{aligned}  
\label{Theta:Lya1}
\end{align}
Second, regarding the almost-sure sense, due to the formulation  \eqref{Theta:SolForm} and the Taylor expansion of the function $x\mapsto \log |\eta_{\Delta t} + x|$, we have 
\begin{align}
     \begin{aligned}
         \limsup_{n\to \infty} \frac{1}{t_n} \log |X^{\theta}_n| 
       & = \limsup_{n\to \infty} \frac{1}{t_n}  \sum_{k=0}^{n-1} \log \left | \eta_{\Delta t} + \frac{\sigma \Delta B_{k}+\frac{\sigma^2}{2}  \Delta B_{k}^2}{1-\lambda \theta \Delta t} \right| \\
   & =  \frac{1}{\Delta t} \log \eta_{\Delta t} + \limsup_{n\to \infty} \frac{1}{t_n} \sum_{k=0}^{n-1} \sum_{j=1}^\infty     
\frac{(-1)^{j}}{j} \left( \frac{\sigma \Delta B_{k}+\frac{\sigma^2}{2}  \Delta B_{k}^2}{\eta_{\Delta t}(1-\lambda \theta \Delta t)} \right)^j  . 
     \end{aligned}
     \label{Theta:Lya2}
\end{align} 
For small enough $\Delta t$, by the same method as Propositions \ref{Theo:1}-\ref{Theo:2}, we directly obtain the following result.

\begin{proposition}
    [Theta-Milstein scheme] Let $0\le \theta \le 1$.
Let $\{X_n^\theta\}$ be the solution to the Milstein scheme \eqref{Theta:Milstein} with a step size $0<\Delta t<1$. Then, there exist $0<\Delta t_3<1$ and $C_5,C_6>0$ such that the estimates    
\begin{gather*}
    \lambda + \dfrac{\sigma^2}{2} - C_5\Delta t  \le \limsup_{n\to\infty} \dfrac{1}{t_n} \log \big[ \mathbb{E}|X_n^\theta|^2\big]^{1/2}  \le  \lambda + \dfrac{\sigma^2}{2} + C_5\Delta t, 
    %\label{Est:Main1}
    \\ 
 \lambda - \dfrac{\sigma^2}{2} - C_6\Delta t^{1/2}   \stackrel{\text{a.s.}}{\le}  \limsup_{n\to\infty} \dfrac{1}{t_n} \log |X_n^\theta| \stackrel{\text{a.s.}}{\le}   \lambda - \dfrac{\sigma^2}{2} + C_6\Delta t^{1/2} 
     %\label{Est:Main2}
\end{gather*}
hold for all $0<\Delta t< \Delta t_2$. Consequently, in this interval of step size, 
\begin{itemize}
    \item[a)] if 
 $(\lambda,\sigma)\in \mathbb S^{\mathsf{ms}}$ or $(\lambda,\sigma)\in \mathbb B^{\mathsf{ms}}$ then $X^{\theta}_n$ is  mean-square exponentially stable, or respectively,  mean-square exponentially blowing up with the exponent $\lambda+\frac{\sigma^2}{2}$; and 
    \item[b)] if 
 $(\lambda,\sigma)\in \mathbb S^{\mathsf{as}}$ or $(\lambda,\sigma)\in \mathbb B^{\mathsf{as}}$ then $X^{\theta}_n$ is almost-sure exponentially stable, or respectively,  almost-sure exponentially blowing up with the exponent $\lambda-\frac{\sigma^2}{2}$.
\end{itemize} 
%    Let $0\le \theta \le 1$. There exists $0<\Delta t_{2}<1$ small enough such that, for all $0<\Delta t<\Delta t_2$, the solution $\{X^{\theta}_n\}$ to the Milstein scheme \eqref{Theta:Milstein} satisfies the same estimates as \eqref{Est:Main1}-\eqref{Est:Main2} (with different constants from  $C_1,C_2$) as well as the same conclusions in  Part a) and b) of Proposition \ref{Theo:System}.
\end{proposition}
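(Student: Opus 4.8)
The plan is to rerun the arguments of Propositions \ref{Theo:1} and \ref{Theo:2} almost verbatim, starting from the closed-form identities \eqref{Theta:Lya1} and \eqref{Theta:Lya2} established above, and to control the remainders with the same Taylor-expansion and strong-law-of-large-numbers estimates. Throughout, I would fix $\Delta t_3>0$ small enough that $1-\lambda\theta\Delta t\ge\tfrac12$ and $(1-\lambda\theta\Delta t)\eta_{\Delta t}=1+(\lambda(1-\theta)-\tfrac{\sigma^2}{2})\Delta t\ge\tfrac34$ for every $0<\Delta t<\Delta t_3$; on this interval the maps $\Delta t\mapsto\eta_{\Delta t}$ and $\Delta t\mapsto(1-\lambda\theta\Delta t)^{-1}$ are smooth and bounded, so every order estimate below holds with constants depending only on $\lambda,\sigma,\theta$.

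For the mean-square bound I would start from \eqref{Theta:Lya1}. Expanding $\log(1+(\lambda(1-\theta)-\tfrac{\sigma^2}{2})\Delta t)$ and $-\log(1-\lambda\theta\Delta t)$ to first order gives $\eta_{\Delta t}=1+(\lambda-\tfrac{\sigma^2}{2})\Delta t+O(\Delta t^2)$, hence $\eta_{\Delta t}^2=1+(2\lambda-\sigma^2)\Delta t+O(\Delta t^2)$; together with $\tfrac{\sigma^2\eta_{\Delta t}\Delta t}{1-\lambda\theta\Delta t}=\sigma^2\Delta t+O(\Delta t^2)$ and $\tfrac{\sigma^2\Delta t+O(\Delta t^2)}{(1-\lambda\theta\Delta t)^2}=\sigma^2\Delta t+O(\Delta t^2)$, this shows that the bracket inside the logarithm in \eqref{Theta:Lya1} equals $1+(2\lambda+\sigma^2)\Delta t+O(\Delta t^2)$. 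Taking $\tfrac1{\Delta t}\log(\cdot)$ and bounding the tail of the logarithmic series by a convergent geometric series exactly as in the proof of Proposition \ref{Theo:1} (shrinking $\Delta t_3$ to ensure convergence) yields $\limsup_n\tfrac1{t_n}\log\mathbb E|X_n^\theta|^2=2(\lambda+\tfrac{\sigma^2}{2})+O(\Delta t)$, which is the first displayed pair of inequalities with $C_5$ the implied constant.

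For the almost-sure bound the series in \eqref{Theta:Lya2} cannot be used as written, since $w_k:=\sigma\Delta B_k+\tfrac{\sigma^2}{2}\Delta B_k^2$ is unbounded, so I would instead mirror Proposition \ref{Theo:2} via Lemmas \ref{Lem:BaIneqn}--\ref{Lem:BaExp}. Writing $\eta_{\Delta t}+\tfrac{w_k}{1-\lambda\theta\Delta t}=\tfrac{1+(\lambda(1-\theta)-\sigma^2/2)\Delta t+w_k}{1-\lambda\theta\Delta t}$ and using the completing-the-square identity $w_k=\tfrac{(\sigma\Delta B_k+1)^2}{2}-\tfrac12\ge-\tfrac12$, one checks, as in \eqref{MIA1}--\eqref{MIA2}, that for $0<\Delta t<\Delta t_3$ almost surely $w_k>-\tfrac23(1+(\lambda(1-\theta)-\tfrac{\sigma^2}{2})\Delta t)$ and $\eta_{\Delta t}+\tfrac{w_k}{1-\lambda\theta\Delta t}>0$. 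Applying \eqref{BaIneqn2} with $\gamma=1+(\lambda(1-\theta)-\tfrac{\sigma^2}{2})\Delta t$ to each factor $\log|\eta_{\Delta t}+w_k/(1-\lambda\theta\Delta t)|$ in \eqref{Theta:Lya2}, then the Kolmogorov strong law of large numbers to the resulting linear, quadratic and $\xi_\gamma$ sums --- using $\mathbb E w_k=\tfrac{\sigma^2}{2}\Delta t$, $\mathbb E w_k^2=\sigma^2\Delta t+\tfrac{3\sigma^4}{4}\Delta t^2$ and Lemma \ref{Lem:BaExp} for the $\xi_\gamma$-term, exactly as in \eqref{Stronglaw:Ap1}--\eqref{Stronglaw:Ap2} --- and finally $\tfrac1{\Delta t}\log\eta_{\Delta t}=\lambda-\tfrac{\sigma^2}{2}+O(\Delta t)$, gives the lower bound with error $O(\sqrt{\Delta t})$; the matching upper bound follows identically from \eqref{BaIneqn1}. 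This is the second displayed pair of inequalities with $C_6$ the implied constant. The consequences (a)--(b) then follow, after possibly shrinking $\Delta t_3$ so that $C_5\Delta t<|\lambda+\tfrac{\sigma^2}{2}|$ and $C_6\sqrt{\Delta t}<|\lambda-\tfrac{\sigma^2}{2}|$ whenever the corresponding continuum exponent is nonzero, exactly as in Theorem \ref{Theo:Main}.

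I expect the only genuine difficulty to be this almost-sure part: one must not take \eqref{Theta:Lya2} at face value but route the estimate through Lemmas \ref{Lem:BaIneqn}--\ref{Lem:BaExp}, the crucial point being to verify --- through the completing-the-square bound $w_k\ge-\tfrac12$ --- that the argument of the logarithm stays a fixed distance from its singularity, uniformly in $\omega$ and in all sufficiently small $\Delta t$. Once this uniform lower bound is secured, the remaining computations are a routine repetition of those carried out in Sections 2.1 and 2.2.
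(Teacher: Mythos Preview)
Your proposal is correct and follows essentially the same approach the paper intends: the paper's own proof is simply the sentence ``For small enough $\Delta t$, by the same method as Propositions \ref{Theo:1}--\ref{Theo:2}, we directly obtain the following result,'' and you have spelled out precisely how that transfer goes, including the key observation that \eqref{Theta:Lya2} is only formal and must be replaced by the inequalities of Lemma \ref{Lem:BaIneqn} together with the completing-the-square bound $w_k\ge-\tfrac12$. The only cosmetic point is that Lemma \ref{Lem:BaExp} is stated for the specific $\gamma_{\Delta t}$ of \eqref{gammaDelta}, whereas you invoke it with $\gamma=1+(\lambda(1-\theta)-\tfrac{\sigma^2}{2})\Delta t$; but its proof depends only on $\gamma$ being bounded away from zero, so this carries over verbatim.
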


\section{Further discussion and numerical simulation}
 
\subsection{Further discussion with different noisy intensities}

In this part, we discuss the case of the different noise intensities driven by the Brownian motion $B_2(t)$ on the solution components $X,Y$ of \eqref{SDE:System}. More specifically, we consider the system  
\begin{equation}
    d \begin{pmatrix}
    X \\
    Y 
    \end{pmatrix} \!=\! 
    \begin{pmatrix}
    \lambda & 0 \\
    0 & \lambda
    \end{pmatrix} 
    \begin{pmatrix}
    X \\
    Y 
    \end{pmatrix} dt 
    + 
    \begin{pmatrix}
    \sigma & 0 \\
    0 & \sigma
    \end{pmatrix}\!\! 
    \begin{pmatrix}
    X \\
    Y 
    \end{pmatrix}  dB_1(t)  +   \begin{pmatrix}
    0 & - \varepsilon_1 \\
    \varepsilon_2 & 0
    \end{pmatrix} \!\!
    \begin{pmatrix}
    X \\
    Y 
    \end{pmatrix}  dB_2(t), 
\label{SDE:Eps1Eps2}
\end{equation}
By the It\^o formula, 
\begin{align}
    d \log|Z(t)| = F(Z(t)) dt + \sigma dB_1(t) + G(Z(t)) dB_2(t) ,
    \label{SDE:Eps1Eps2:Log}
\end{align}
where, for $z=(x,y) \in \mathbb{R}^2$, the nonlinearities $F$, $G$ are given by  
\begin{align*}
    F(z) = \lambda -\dfrac{\sigma^2}{2} + \frac{1}{2|z|^4} \Big[  (\varepsilon_1 y^2+\varepsilon_2 z^2)^2+y^2 z^2(2\varepsilon_1 \varepsilon_2 -\varepsilon_1^2-\varepsilon_2^2)\Big]  
\end{align*}
and $G(z)=(\varepsilon_2-\varepsilon_1)xy/|z|^2$.   

\medskip

Compared to Equation \eqref{SDE:System}, the nonlinearities $F,G$ make it much more difficult to sharply estimate the Lyapunov exponent of the corresponding Milstein scheme. Although the above nonlinearities are obviously bounded, estimating the discrete Lyapunov exponent from below is challenging. This will be subject of future investigation.  

\subsection{Numerical simulations}

In this section, we present some numerical simulations to illustrate the obtained results, focusing on the exponential stability versus exponential blow-up of the solution $Z_n$ to the Milstein scheme \eqref{FinalLable} in the almost-sure sense. 

\medskip

By fixing $\Delta t=10^{-3}$, and take $t_n=n\Delta t=n10^{-3}$, we will numerically observe large-time behaviours of the discrete solution $Z_n$ as the discretised time $t_n$ increases up to $n=10^4$. We plot the logarithms $\log|Z_n|$ from fifty pathwise solutions by randomly generating the Brownian motion fifty times. 

\medskip

Let us first discuss the exponential blow-up of the discrete solution $Z_n$, i.e. $(\lambda,\varepsilon,\sigma)\in \mathbf B^{\mathsf{as}}$ due to Theorem \ref{Theo:Main}.  In Parts (a) and (b) of Figure \ref{fig:1}, we respectively take  $(\lambda,\varepsilon,\sigma)=(7,2,4)$ and $(\lambda,\varepsilon,\sigma)=(8,2,4)$, where the bold red lines are the averages of the logarithms $\log|Z_n|$ of the corresponding pathwise solutions, which behave as straight lines. We see that the slopes of these lines are positively small, and so are the discrete Lyapunov exponents, which also reflect small values of the continuum Lyapunov exponent $\lambda+\varepsilon^2/2-\sigma^2/2$.  Furthermore, in the case $(\lambda,\varepsilon,\sigma)=(7,2,4)$, the red line is closer to the horizontal axis since the discrete Lyapunov exponent is smaller, and many pathwise solutions may not be blowing up, or even, oscillating around the horizontal axis (since their logarithms oscillate around the horizontal axis).

%%%% FIGURE 1
\begin{figure}[H]
\begin{center}
\begin{subfigure}{.45\linewidth}
\centering
\includegraphics[width=\textwidth]{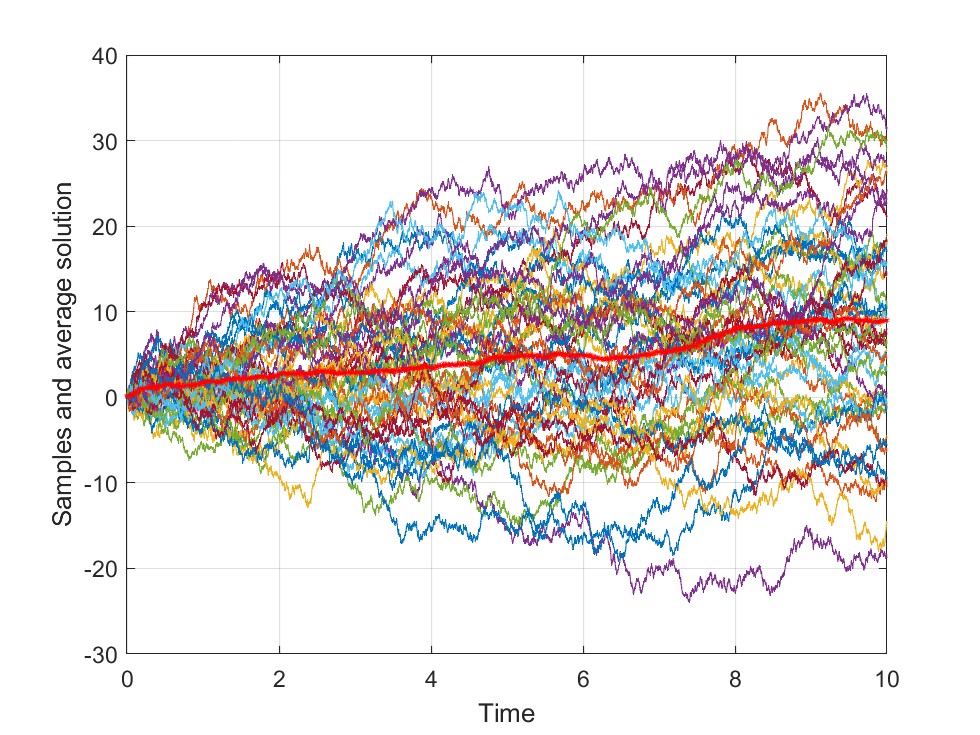}
\caption{ $(\lambda,\varepsilon,\sigma)=(7,2,4)$}
\label{fig:1a}
\end{subfigure}%
\vspace{0.2cm}
\begin{subfigure}{.45\linewidth}
\centering
\includegraphics[width=\textwidth]{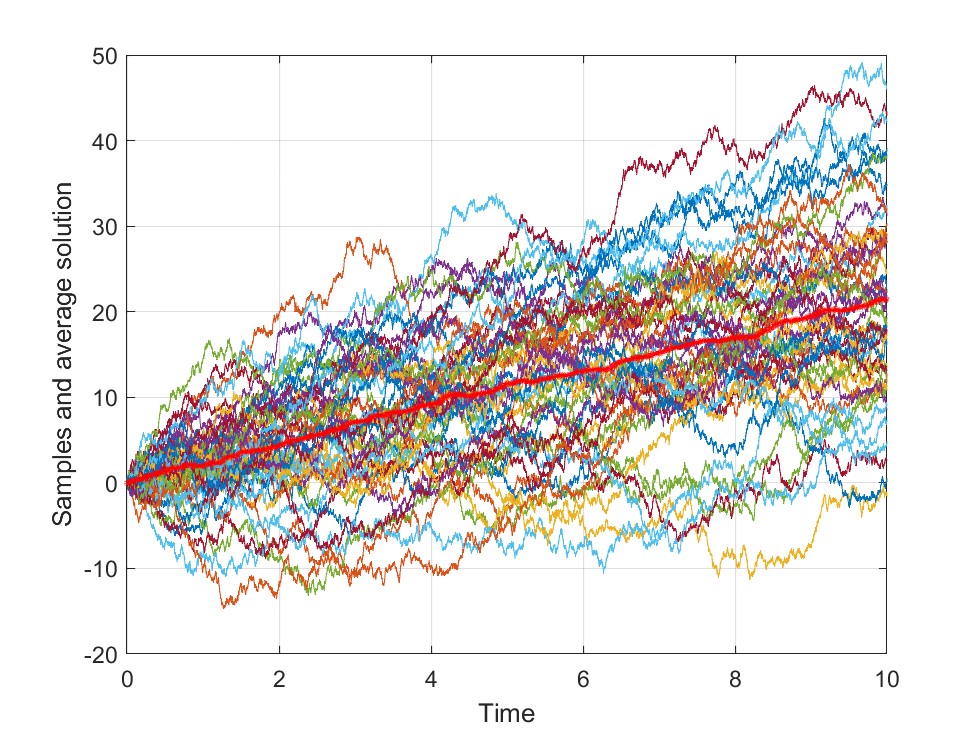}
\caption{ $(\lambda,\varepsilon,\sigma)=(8,2,4)$}
\label{fig:1b}
\end{subfigure}%

\begin{subfigure}{.45\linewidth}
\centering
\includegraphics[width=\textwidth]{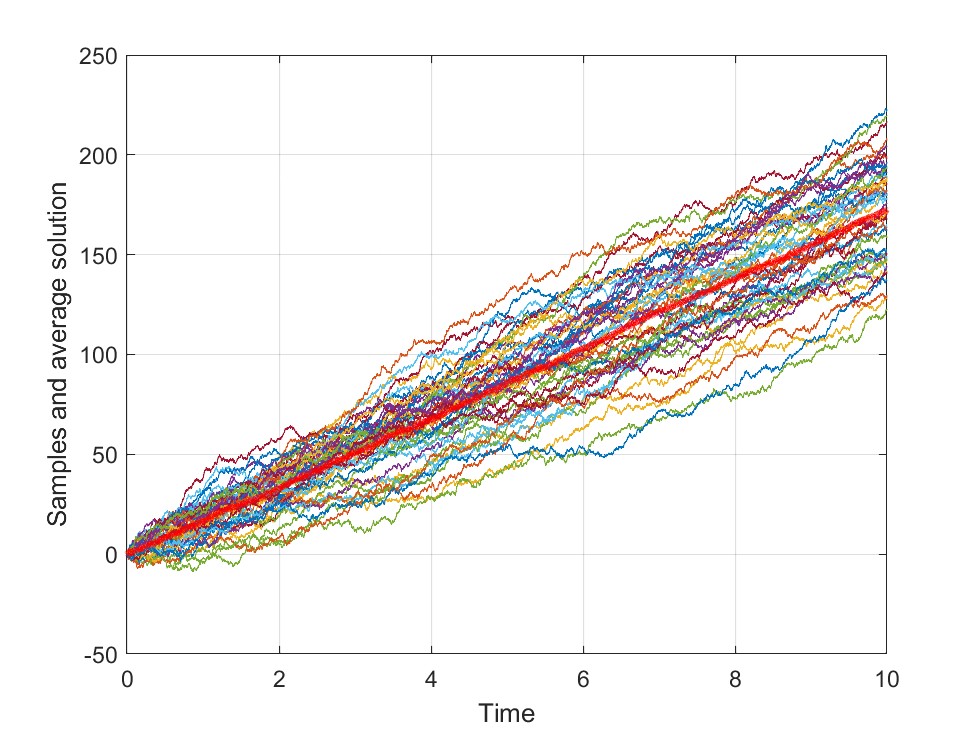}
\caption{ $(\lambda,\varepsilon,\sigma)=(30,6,8)$}
\label{fig:1c}
\end{subfigure}%
\begin{subfigure}{.45\linewidth}
\centering
\includegraphics[width=\textwidth]{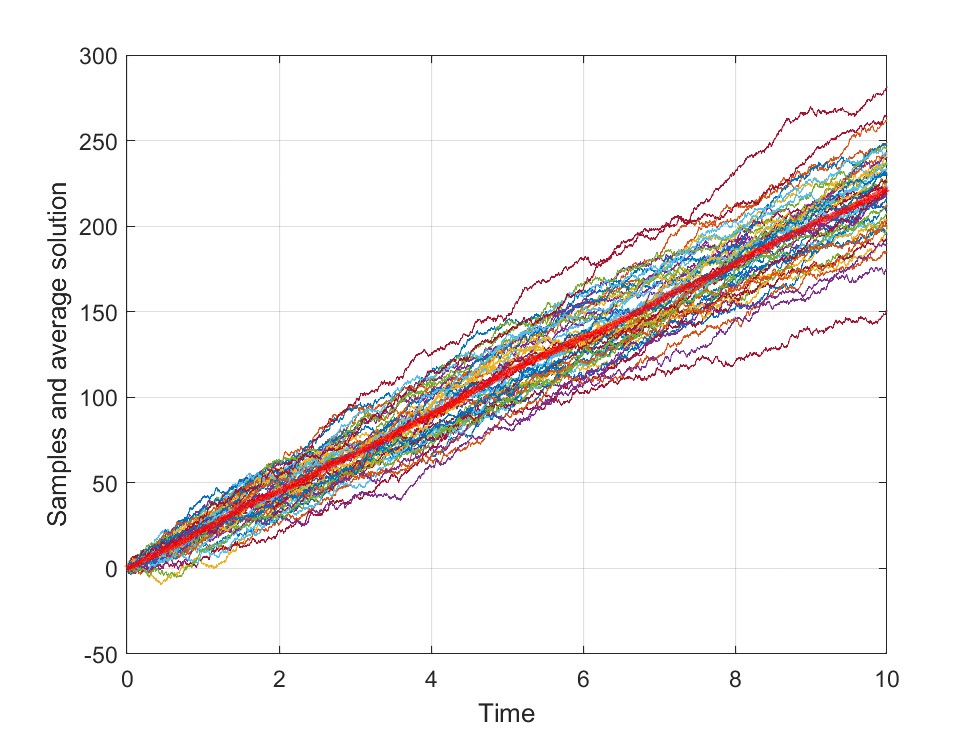}
\caption{$(\lambda,\varepsilon,\sigma)=(2,-10,8)$}
\label{fig:1d}
\end{subfigure}%
\caption{Almost-sure exponential blow-up of $| Z_n |$ via the graph of $\log|Z_n|$.}
\label{fig:1}
\end{center}
\end{figure} 

 Parts (c) and (d) of Figure \ref{fig:1} plot with  $(\lambda,\varepsilon,\sigma)=(30,6,8)$ and $(\lambda,\varepsilon,\sigma)=(2,-10,8)$, where the red lines have moderate slopes. The pathwise solutions oscillate with smaller amplitudes compared to Parts (a)-(b). Moreover, all the pathwise solutions' logarithms tend to increase after a long time.

 \medskip

In Figure \ref{fig:2}, the simulation illustrates fifty other pathwise solutions in the case that the continuum Lyapunov exponent is strictly negative, which shows the exponential stability of the zero equilibrium of the Milstein scheme. Similarly to the blowing-up behaviours in Figure \ref{fig:1}, we can observe likely properties of the sample, such as many pathwise solutions may not decay to zero or oscillate around the horizontal axis when the continuum Lyapunov exponent is negative and close to zero, as well as all the pathwise solution's logarithms tend to go down after a long time when the continuum Lyapunov exponent is more negative. 

\medskip

Now, let us vary the time step size $\Delta t$, from $10^{-1}$ to $10^{-5}$, to compare the discrete Lyapunov exponent with the continuum one, shown in Figure \ref{fig:3}, which demonstrate our sharp estimate in Theorem \ref{Theo:Main}. In other words, it says the equivalence of the exponential stability versus exponential blow-up between the Milstein scheme \eqref{FinalLable} with $0<\Delta t< \Delta t_1$ (for a small enough time step $\Delta t_1$) and the underlying model. Both subfigures show that these exponents are close to each other when the step size is of the order $O(10^{-3})$. Consequently, the mentioned equivalence can be observed when $\Delta t_1=O(10^{-3})$, which has not been obtained in our analysis.

%%%%%%%%%%%%%%%%
%%%% FIGURE 2
\begin{figure}[H]
\begin{center}
\begin{subfigure}{.45\linewidth}
\centering
\includegraphics[width=\textwidth]{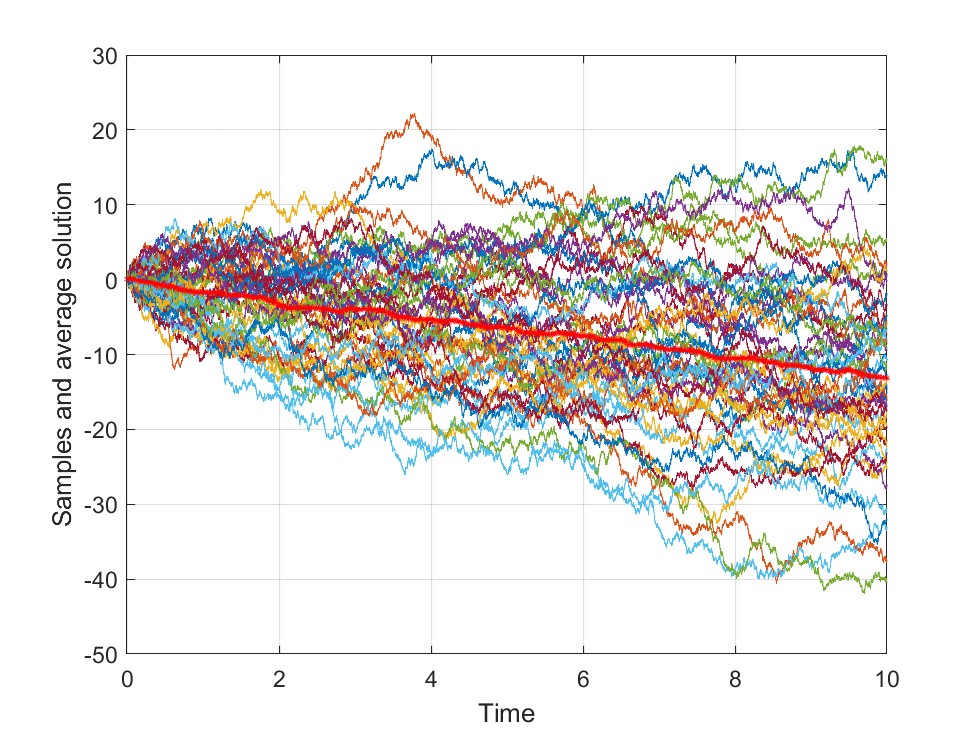}
\caption{$(\lambda,\varepsilon,\sigma)=(0.2,3.5,4)$}
\label{fig:2a}
\end{subfigure}%
\vspace{0.2cm}
\begin{subfigure}{.45\linewidth}
\centering
\includegraphics[width=\textwidth]{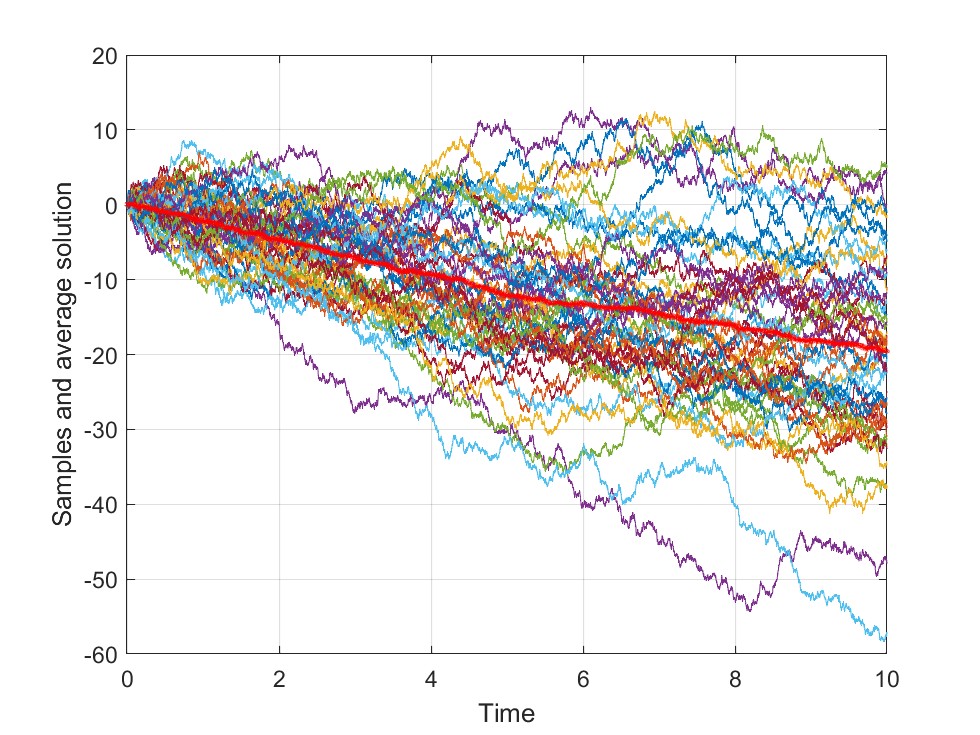}
\caption{$(\lambda,\varepsilon,\sigma)=(6,0.5,4)$}
\label{fig:2b}
\end{subfigure}%

\begin{subfigure}{.45\linewidth}
\centering
\includegraphics[width=\textwidth]{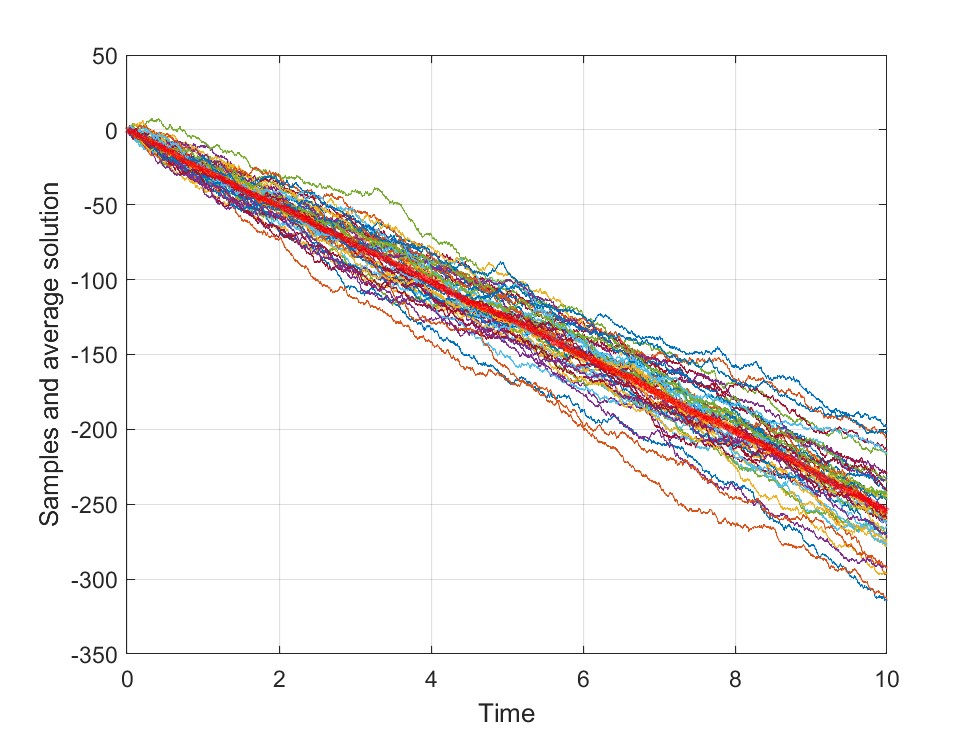}
\caption{$(\lambda,\varepsilon,\sigma)=(6,0.5,8)$}
\label{fig:2c}
\end{subfigure}%
\begin{subfigure}{.45\linewidth}
\centering
\includegraphics[width=\textwidth]{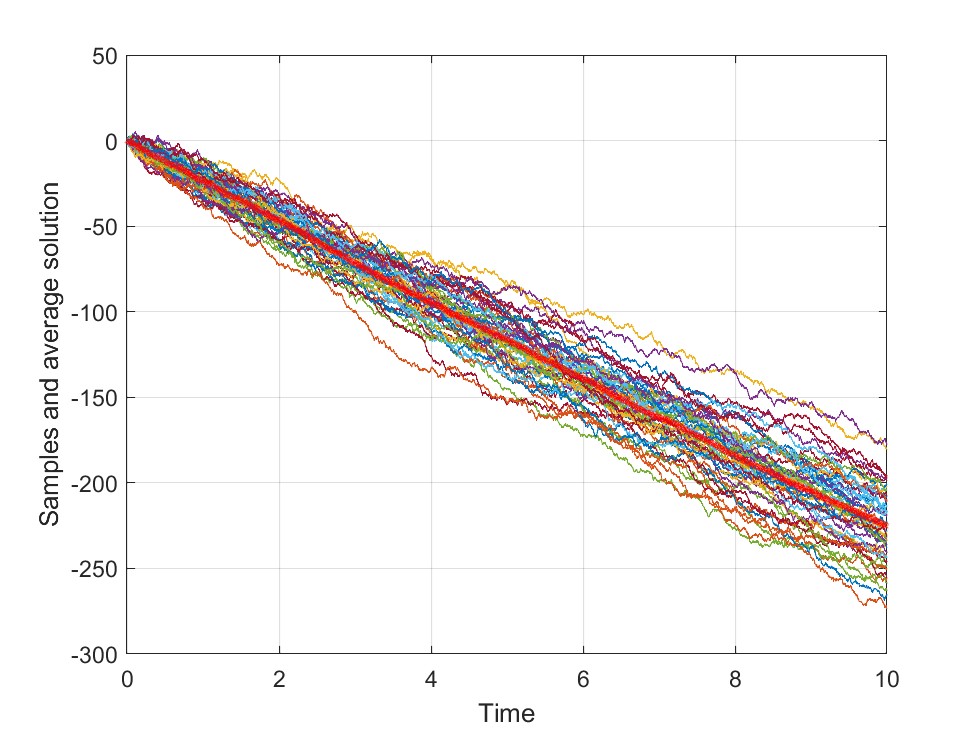}
\caption{$(\lambda,\varepsilon,\sigma)=(0.5,4,8)$}
\label{fig:2d}
\end{subfigure}%
\caption{Almost-sure exponential stability of $| Z_n |$ via the graph of $\log|Z_n|$.}
\label{fig:2}
\end{center}
\end{figure}

%%%%%%%%%%%%

%%%% FIGURE 3
\begin{figure}[H]
\begin{center}
\begin{subfigure}{.45\linewidth}
\centering
\includegraphics[width=\textwidth]{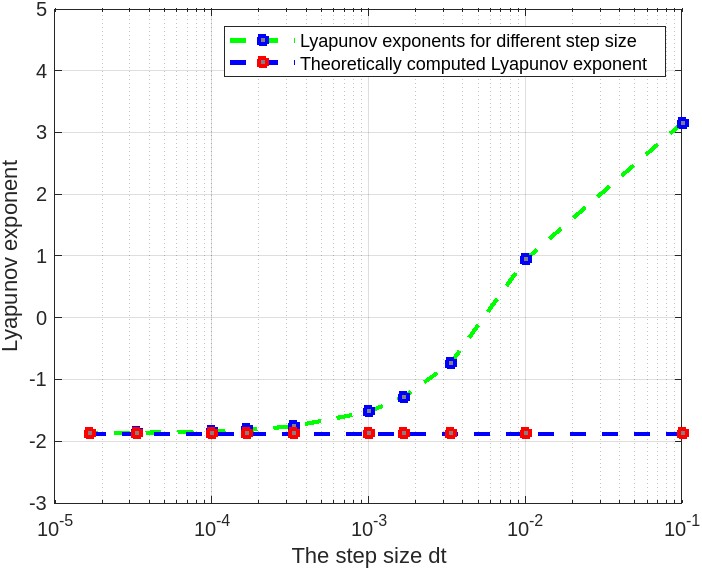}
\caption{ $(\lambda,\varepsilon,\sigma)=(6,0.5,4)$}
\label{fig:3a}
\end{subfigure}%
\vspace{0.2cm}
\begin{subfigure}{.45\linewidth}
\centering
\includegraphics[width=\textwidth]{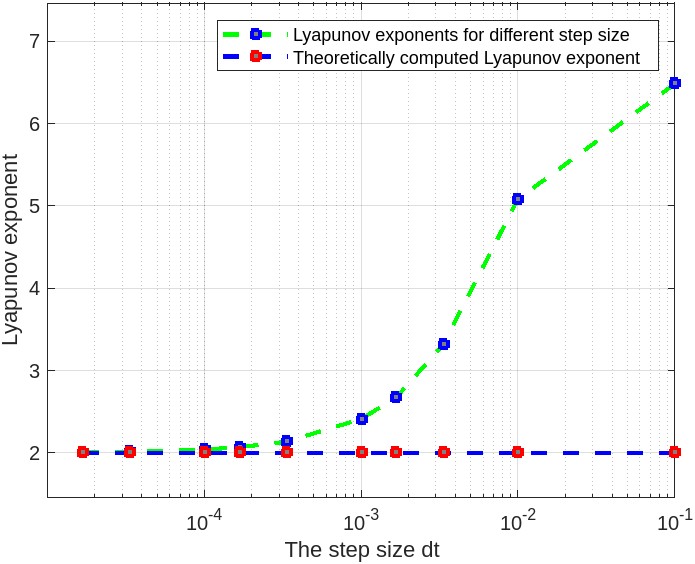}
\caption{ $(\lambda,\varepsilon,\sigma)=(8,2,4)$}
\label{fig:3b}
\end{subfigure}%
\caption{Continuum and discrete Lyapunov exponents as the time step $\Delta t$ varies.}
\label{fig:3}
\end{center}
\end{figure}
 
\appendix

\section*{Appendix}

\begin{proof}[Proof of Lemma \ref{Lem:BaIneqn}] The inequalities \eqref{BaIneqn1}-\eqref{BaIneqn2} are fundamental and can be directly proved by considering the monotonic properties of the following functions 
\begin{gather*}
    f(x):= \log(\gamma+x) - \log  \gamma  - \dfrac{1}{\gamma} x + \dfrac{1}{2\gamma^2}x^2-\dfrac{1}{3\gamma^3}x^3, \quad -\gamma < x < \infty, \\
    g(x):= \log(\gamma+x) - \log  \gamma  - \dfrac{1}{\gamma} x + \dfrac{1}{2\gamma^2}x^2 - \xi_\gamma(x),  \quad - \frac{2}{3}\gamma < x < \infty,
\end{gather*}
and their derivatives up to the fourth order, where splitting their domains into subsets of negative and nonnegative parts is needed. 

\medskip

We note that the inequality \eqref{BaIneqn1} is usual since it can be seen as a slight modification of \eqref{LogIneq}, we only present the proof of \eqref{BaIneqn2} here. The case $x=0$ is trivial. 
Let us first consider $x>0$, which corresponds to $\xi_\gamma(x)=-x^4/(4\gamma^4)$. Taking the derivatives gives
\begin{align*}  
    \left\{ \begin{array}{llrll}
   g'(x) = \dfrac{1}{\gamma +x} - \dfrac{1}{\gamma}  + \dfrac{1}{\gamma^2}x + \dfrac{1}{\gamma^4}x^3, \\
      g''(x) = \dfrac{-1}{(\gamma +x)^2}  +\dfrac{1}{\gamma^2} +\dfrac{3}{\gamma^4}x^2, \\
     g'''(x) = \dfrac{2}{(\gamma +x)^3} + \dfrac{6}{\gamma^4}x, \\
     g^{(4)}(x) = \dfrac{-6}{(\gamma +x)^4} + \dfrac{6}{\gamma^4}. 
    \end{array} \right.
 \end{align*}
 Since $ g^{(4)}(x)>0$ for all $x>0$, the third order derivative $g'''(x)$ is an  increasing function. Hence, $g'''(x)>g'''(0)>0$. This implies $g''(x)>g''(0)=0$ and, thus, $g'(x)>g'(0)=0$, i.e., $g$ is increasing on $(0,\infty)$, which ensures \eqref{BaIneqn2} for $x>0$. 

 \medskip
 
If $- \frac{2}{3}\gamma < x < 0$, we have $\xi_\gamma(x)=9x^3/\gamma^3$. Therefore,  
 \begin{align*}  
    \left\{ \begin{array}{llrll}
   g'(x) = \dfrac{1}{\gamma +x} - \dfrac{1}{\gamma}  + \dfrac{1}{\gamma^2}x - \dfrac{27}{\gamma^3}x^2, \\
      g''(x) = \dfrac{-1}{(\gamma +x)^2}  +\dfrac{1}{\gamma^2} -\dfrac{54}{\gamma^3}x, \\
     g'''(x) = \dfrac{2}{(\gamma +x)^3} - \dfrac{54}{\gamma^3}. 
    \end{array} \right.
 \end{align*}
In this interval of $x$, we have $g'''(x)<0$ and so $g''(x)>g''(0)=0$. This yields $g'(x)<g'(0)=0$, i.e., $g$ is decreasing on $(-2\gamma/3,0)$, which ensures \eqref{BaIneqn2} for $-2\gamma/3< x<0$. 
\end{proof}

%%%%%%%%%%%%%%%%%%%%%%%%%%%% 
\noindent {\bf \Large Acknowledgement} \; This research is funded by Hanoi University of Science and Technology (HUST) under project number T2024-PC-010. The author is grateful to Dr. Bao-Ngoc Tran for fruitful discussions, Ass.-Prof. Bao Quoc Tang for assistance concerning numerical simulations. Special thanks also go to Prof. Arturo Kohatsu-Higa, Assoc.-Prof. Hoang-Long Ngo and Dr. Ngoc Khue Tran for their helpful comments.

\vspace{0.3cm}

\noindent {\Large \bf Conflict of Interest}   The author declares that they have no conflict of interest.

\vspace{0.3cm}

\noindent {\Large \bf Data Availability}  Data sharing not applicable.

\vspace{0.3cm}
 
 %\bibliography{Milstein/References}
%\bibliographystyle{alpha}

\end{document}